\numberwithin{equation}{section}
\newtheorem{thm}{Theorem}[section]
\newtheorem{cor}[thm]{Corollary}
\newtheorem{prop}[thm]{Proposition}
\newtheorem{lem}[thm]{Lemma}
\theoremstyle{definition}
\newtheorem{defn}[thm]{Definition}
\theoremstyle{remark}
\newtheorem{rmk}[thm]{Remark}
\newtheorem{conv}[thm]{Convention}
\def\co{\colon\thinspace}
\newcommand{\mb}[1]{\mathbb{#1}}
\newcommand{\Hom}{\ensuremath{{\rm Hom}}}
\newcommand{\Tor}{\ensuremath{{\rm Tor}}}
\newcommand{\colim}{\ensuremath{\mathop{\rm colim}}}
\newcommand{\hocolim}{\ensuremath{\mathop{\rm hocolim}}}
\newcommand{\overto}{\mathop\rightarrow}
\newcommand{\overfrom}{\mathop\leftarrow}
\newcommand{\Spec}{{\rm Spec}}
\newcommand{\Proj}{{\rm Proj}}
\newcommand{\Spf}{{\rm Spf}}
\newcommand{\tmf}{{\rm tmf}}
\newcommand{\TMF}{{\rm TMF}}
\newcommand{\BPP}[1]{{\rm BP}{\left\langle#1\right\rangle}}
\newcommand{\bxi}{{\bar\xi}}
\newcommand{\btau}{{\bar\tau}}
\newcommand{\multmod}{{\cal M}_{\mb G_m}}
\newcommand{\fmultmod}{{\cal M}_{\widehat{\mb G}_m}}
\newcommand{\psth}{\psi\mbox{-}\theta}
\newcommand{\smsh}[1]{\ensuremath{\mathop{\wedge}_{#1}}}
\newcommand{\comp}[1]{\ensuremath{#1^\wedge}}
\newcommand{\pow}[1]{\left[\!\left[{#1}\right]\!\right]}
\newcommand{\laur}[1]{\left(\!\left({#1}\right)\!\right)}
\title{Strictly commutative realizations of diagrams over the
    Steenrod algebra and topological modular forms at the prime $2$}
\author{Tyler Lawson\thanks{Partially supported by NSF
    grant 0805833 and a fellowship from the Sloan foundation.}, Niko Naumann}
\begin{document}
\maketitle
\begin{abstract}
  Previous work constructed a generalized truncated Brown-Peterson
  spectrum of chromatic height $2$ at the prime $2$ as an ${\cal
    E}_\infty$-ring spectrum, based on the study of elliptic curves
  with level-$3$ structure.  We show that the natural map forgetting
  this level structure induces an ${\cal E}_\infty$-ring map from the
  spectrum of topological modular forms to this truncated
  Brown-Peterson spectrum, and that this orientation fits into a
  diagram of ${\cal E}_\infty$-ring spectra lifting a classical
  diagram of modules over the mod-$2$ Steenrod algebra.
  In an appendix we document how to organize Morava's forms of
$K$-theory into a sheaf of ${\mathcal E}_\infty$-ring spectra.
\end{abstract}

\section{Introduction}\label{sec:intro}

Some of the main applications of modern algebraic topology, including
the development of structured ring spectra
\cite{ekmm,hovey-shipley-smith}, have been to the subject of
algebraic $K$-theory.  These new foundations introduce strictly
associative and commutative ring objects in the category of spectra,
together with their categories of modules.  These provide a large
library of new objects whose algebraic $K$-theory can be calculated
and studied.  These illuminate general phenomena that bear on old
calculations of the algebraic $K$-theory of rings, of simplicial
rings, and of spaces.

Based on computational studies in algebraic $K$-theory, Ausoni and
Rognes \cite[Introduction]{ausoni-rognes} now expect the
existence of a redshift phenomenon (generalizing the Bott-periodic
phenomena appearing in the algebraic $K$-theory of fields) and
initiated a long-term program to study the relationship between
algebraic $K$-theory and the chromatic filtration.  These conjectures
have been supported by their work showing that the algebraic
$K$-theory of complex $K$-theory supports information at chromatic
level $2$.

The next computational steps in such a research program would
involve study of the algebraic $K$-theory of objects at chromatic
level $2$.  Ongoing work of Bruner and Rognes aims to compute the
algebraic $K$-theory of the topological modular forms spectrum
$K_*(\tmf_{(2)})$ \cite{john-slide} and of related spectra such as
$\BPP{2}$.  These computations take place using the machinery of
topological cyclic homology.

This computational work is greatly assisted by the use of higher
multiplicative structures.  For an associative object $R$, the
algebraic $K$-theory $K(R)$ and topological cyclic homology $TC(R)$
are spectra connected by a cyclotomic trace $K(R) \to TC(R)$.
However, if the category of $R$-modules has a symmetric monoidal
structure analogous to the tensor product of modules over a
commutative ring, the algebraic $K$-theory and topological cyclic
homology inherit the structure of ring objects themselves
\cite{elmendorf-mandell}.
Computations in topological
cyclic homology involve numerous spectral sequence calculations, and
these are greatly assisted by the existence of ring structures (or the
data of power operations \cite{bruner-rognes}) and by naturality
arguments.

In addition, many of these computations begin with the B\"okstedt
spectral sequence, which requires information about the mod-$p$
homology of the spectrum in question.

Previous work, based on a study of the moduli of elliptic curves with
level $\Gamma_1(3)$-structures, showed the following result.

\begin{thm}[{\cite[Theorem~1.1]{lawsonnaumann}}]
\label{thm:reminder}
There exists a $2$-local complex oriented $\cal E_\infty$-ring
spectrum $\tmf_1(3)_{(2)}$ such that the composite map of graded rings
\[
\mb Z_{(2)}[v_1, v_2] \subset BP_* \to MU_{(2),*} \to \tmf_1(3)_{(2),*}
\]
is an isomorphism.
\end{thm}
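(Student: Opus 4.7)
My plan is to construct $\tmf_1(3)_{(2)}$ as an ${\cal E}_\infty$-refinement of a Landweber-exact complex-oriented spectrum associated to the universal Weierstrass form carrying a $\Gamma_1(3)$-level structure, with the affine structure of the relevant moduli doing most of the geometric work.

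First, I would set up the algebraic geometry. Over $\mathbb{Z}_{(2)}$, where $3$ is a unit, a $\Gamma_1(3)$-level structure on an elliptic curve $E$ amounts to the choice of a point $P \in E$ of exact order $3$. Translating so that $P = (0,0)$ and rescaling so that the tangent line at $P$ is $y = 0$ places $E$ in the Weierstrass form
\[
y^2 + a_1 xy + a_3 y = x^3,
\]
with discriminant $\Delta = a_3^3(a_1^3 - 27 a_3)$. Crucially, the involution $[-1]$ on $E$ sends $(0, 0)$ to $(0, -a_3) \ne (0, 0)$, so the level structure has no nontrivial automorphisms, and the compactified moduli $\overline{\mathcal M}_1(3)_{(2)}$ is representable by the affine scheme $\Spec \mathbb{Z}_{(2)}[a_1, a_3]$.

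Next, I would verify Landweber exactness for the formal group $\hat E$ of the universal generalized elliptic curve. A $2$-typical computation of the logarithm yields the congruences $v_1 \equiv a_1 \pmod 2$ and $v_2 \equiv a_3 \pmod{(2, a_1)}$, so $(2, v_1, v_2)$ maps to the regular sequence $(2, a_1, a_3)$ in $\mathbb{Z}_{(2)}[a_1, a_3]$; combined with the fact that elliptic formal groups have height at most $2$, this verifies Landweber's criterion and produces a complex-oriented homotopy-commutative ring spectrum with the prescribed homotopy.

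The hardest step will be promoting this ring spectrum to a strictly commutative ${\cal E}_\infty$-ring. Here I would apply Goerss-Hopkins obstruction theory (or equivalently a version of Lurie's representability theorem for derived moduli of generalized elliptic curves) to the weakly even-periodic presheaf of elliptic spectra on étale affines over $\overline{\mathcal M}_1(3)_{(2)}$. The relevant obstructions live in André-Quillen cohomology groups of the polynomial $\mathbb{Z}_{(2)}$-algebra $\mathbb{Z}_{(2)}[a_1, a_3]$ with coefficients in tensor powers of the invariant-differentials line bundle $\omega$, and I expect them to vanish in positive cohomological degree precisely because this ring is smooth over $\mathbb{Z}_{(2)}$. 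Once the ${\cal E}_\infty$-realization is in hand, the identification $\pi_* \tmf_1(3)_{(2)} \cong \mathbb{Z}_{(2)}[v_1, v_2]$ follows by Nakayama's lemma, since both sides are graded polynomial rings on two generators of degrees $2$ and $6$ and the congruences above render the canonical map surjective modulo $(2, v_1)$.
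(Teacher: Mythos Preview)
This theorem is quoted from the authors' previous paper and is not proved in the present one; however, the construction is recalled in enough detail here (Section~2 and the chromatic machinery of Section~3) that one can compare.  Your proposal contains a genuine error at the Landweber step, and the overall architecture differs from the actual construction.

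\textbf{The Landweber-exactness claim is false.}  You argue that $(2,v_1,v_2)$ maps to the regular sequence $(2,a_1,a_3)$ and then invoke ``elliptic formal groups have height at most $2$'' to conclude Landweber's criterion.  But Landweber's criterion requires, for \emph{every} $n$, that $v_n$ act injectively on $M/(p,v_1,\dots,v_{n-1})$.  Here $M/(2,v_1,v_2)\cong \mathbb F_2$ is nonzero while $v_3$ acts as zero on it, so the criterion fails at $n=3$.  Geometrically, the Weierstrass equation $y^2+a_1xy+a_3y=x^3$ at $(a_1,a_3)=(0,0)$ is the cuspidal cubic, whose formal group is additive; the formal group over $\mb Z_{(2)}[a_1,a_3]$ therefore does not have height $\le 2$ everywhere.  (Relatedly, $\overline{\mathcal M}_1(3)_{(2)}$ is not $\Spec\mb Z_{(2)}[a_1,a_3]$: as the paper explains, it is the weighted projective stack $[(\mb A^2\smallsetminus\{0\})/\!/\mb G_m]$, and one must remove the origin to have a generalized elliptic curve at all.)  No generalized $\BPP{2}$ can arise from Landweber's theorem for exactly this reason.

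\textbf{How the construction actually goes.}  The cited paper builds $\tmf_1(3)_{(2)}$ by chromatic fracture rather than by a single global obstruction argument.  One first produces the $K(2)$-local piece as a homotopy fixed-point spectrum of a Lubin--Tate spectrum at the unique supersingular point (Proposition~\ref{prop:universal-def} and Section~\ref{subsec:k2-local-maps}), then the $K(1)$-local piece via Goerss--Hopkins obstruction theory for $\psth$-algebras over the ordinary locus (Section~\ref{subsec:K1-local-maps}), glues these along $L_{K(1)}L_{K(2)}$, adjoins the rational part, and finally takes the connective cover to obtain a spectrum with $\pi_*\cong\mb Z_{(2)}[A,B]$.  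Your suggestion to run Goerss--Hopkins ``globally'' with obstructions in Andr\'e--Quillen cohomology of the smooth ring $\mb Z_{(2)}[a_1,a_3]$ is not how that theory is set up; the available versions operate $K(n)$-locally (or via Lurie's theorem, which is a different and much heavier input), and it is precisely the chromatic decomposition and the subsequent passage to the connective cover that circumvent the failure of Landweber exactness.
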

(Here we say that a multiplicative cohomology theory is ``complex
oriented'' if it is given compatible choices of orientation for all
complex vector bundles; we employ the fact that this is equivalent to
the choice of a map of ring spectra $MU \to R$.)

However, this result was obtained by means of obstruction theory, and
only used the modular interpretation of $\tmf_1(3)_{(2)}$ in a
superficial way. The goal of the current paper is to gain a better
understanding of the larger context inhabited by the spectrum
$\tmf_1(3)_{(2)}$; this is closely related to the study made by
Mahowald and Rezk in~\cite{mahowald-rezk}.  With the above
$K$-theoretic applications in mind, another goal is to exhibit the
mod-$2$ cohomology of $\tmf_1(3)_{(2)}$.  (Forthcoming work of Hill and
the first author should recover a $C_2$-action and a connective
spectrum $\tmf_0(3)$.)

There is a map of moduli stacks of generalized elliptic curves
\[
\overline{\cal M}_1(3)\to \overline{\cal M}
\]
(\cite[Theorem 4.1.1, (1) with $N=3$, $n=1$]{conrad}).  This is the
unique map extending the map that takes a smooth elliptic curve with a
$3$-torsion point and forgets the point.  This map ramified at exactly
one of the two cusps of $\overline{\cal M}_1(3)$ but is log-\'etale.
The modular interpretation of $\tmf$ suggests that this map should
have a topological realization.  In fact, we would like to construct a
$2$-local commutative diagram of ${\cal E}_\infty$-ring spectra
corresponding to (the connective covers of) the global sections of
sheaves of ${\cal E}_\infty$-ring spectra in the following diagram:
\[
\xymatrix{ \overline{\cal M} & & [\Spec(\mb Z_{(2)})/\!/ \{ \pm 1\}]  \ar[ll]_(.6){\mathrm{\small cusp}}\\
  \overline{\cal M}_1(3)\ar[u] & & \Spec(\mb
  Z_{(2)})\ar[ll]_{\mathrm{\small ramified\,\, cusp}} \ar[u] }
\]
A realization of this diagram is achieved by the following main
result of this note.

To state it, we recall that, for each $n$, the mod-$2$ Steenrod algebra
  ${\cal A}^*$ contains exterior subalgebras $E(n)$ generated by the Milnor
  primitives $Q^0, \ldots, Q^n$, and larger subalgebras ${\cal
      A}(n)$ generated by $Sq^1, \ldots, Sq^{2^{n+1}}$.

\begin{thm}\label{thm:main}
There is a commutative diagram of connective ${\cal E}_\infty$-ring
    spectra as follows:
\begin{equation}
  \label{eq:main-diagram}
\xymatrix{ \tmf_{(2)} \ar[r]^{c}\ar[d]^{o} & ko_{(2)}\ar[d]^{\iota} \\
\tmf_1(3)_{(2)} \ar[r]^{\tilde{c}} & ku_{(2)}}
\end{equation}
Here $\iota$ is the complexification map, $o$ is a
$\tmf_{(2)}$-orientation of $\tmf_1(3)$, $c$ corresponds to the
cusp on the moduli space of elliptic curves, and $\tilde{c}$
corresponds to the unique ramified cusp on the moduli space of
elliptic curves with level $\Gamma_1(3)$-structure.

In mod-$2$ cohomology, this induces the
following canonical diagram of modules over the mod-$2$ Steenrod
algebra ${\cal A}^*$:
\[ \xymatrix{ {\cal A}^*/\!/{\cal A}(2) & {\cal A}^*/\!/{\cal A}(1)\ar[l]\\
{\cal A}^*/\!/E(2)\ar[u]& {\cal A}^*/\!/E(1).\ar[l]\ar[u]}
\]
There exists a complex orientation of $\tmf_1(3)_{(2)}$ such that
  in homotopy, $\tilde{c}$ induces a map sending the Hazewinkel 
  generators $v_1$ to $v_1$ and $v_2$ to zero, and there is a cofiber
  sequence of $\tmf_1(3)_{(2)}$-modules
\[
\Sigma^6 \tmf_1(3)_{(2)}\stackrel{\cdot v_2}{\longrightarrow} \tmf_1(3)_{(2)}
\stackrel{\tilde{c}}{\longrightarrow} ku_{(2)}.
\]
\end{thm}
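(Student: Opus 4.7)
The plan is to assemble the square \eqref{eq:main-diagram} from the sheaf-theoretic picture suggested by the diagram of moduli stacks, then extract the cohomological and homotopy-theoretic consequences from standard computations. The maps $c\co \tmf_{(2)}\to ko_{(2)}$ and $\iota\co ko_{(2)}\to ku_{(2)}$ are classical: $c$ comes from the Goerss--Hopkins--Miller sheaf of $\mathcal{E}_\infty$-ring spectra on $\overline{\mathcal{M}}$ together with the identification of its section at the cusp $[\Spec(\mb Z_{(2)})/\{\pm 1\}]$ with $ko_{(2)}$, and $\iota$ is complexification. To construct $o$, I would upgrade Theorem~\ref{thm:reminder} by producing a sheaf of $\mathcal{E}_\infty$-ring spectra on $\overline{\mathcal{M}}_1(3)_{(2)}$ whose global sections recover the $\mathcal{E}_\infty$-ring $\tmf_1(3)_{(2)}$ and which is compatible with the sheaf on $\overline{\mathcal{M}}$; pulling back global sections along $\overline{\mathcal{M}}_1(3)\to\overline{\mathcal{M}}$ then gives $o$. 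To construct $\tilde c$, I would use the appendix's organization of Morava's forms of $K$-theory into a sheaf of $\mathcal{E}_\infty$-ring spectra and restrict along the inclusion $\Spec(\mb Z_{(2)})\to\overline{\mathcal{M}}_1(3)$ of the ramified cusp, the section at that point being $ku_{(2)}$. Commutativity of the square is then inherited from commutativity of the underlying diagram of stacks.

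For the cohomological computation, the identifications of $H^*(\tmf_{(2)};\mb F_2)$, $H^*(ko_{(2)};\mb F_2)$, and $H^*(ku_{(2)};\mb F_2)$ with $\mathcal{A}^*/\!/\mathcal{A}(2)$, $\mathcal{A}^*/\!/\mathcal{A}(1)$, and $\mathcal{A}^*/\!/E(1)$ respectively are classical. For $H^*(\tmf_1(3)_{(2)};\mb F_2)$ I would exploit Theorem~\ref{thm:reminder}: the complex orientation together with $\pi_*=\mb Z_{(2)}[v_1,v_2]$ forces the action of the Milnor primitives $Q_0,Q_1,Q_2$ (via $Q_i\leftrightarrow v_i$ on the image of $BP$) and identifies the cohomology with $\mathcal{A}^*/\!/E(2)$. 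The maps in the induced diagram of Steenrod-algebra modules are then determined by naturality with respect to the subalgebra inclusions $E(n)\subset\mathcal{A}(n)$ and $E(1)\subset E(2)$, $\mathcal{A}(1)\subset\mathcal{A}(2)$.

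For the last two assertions, since the formal group on $ku_{(2)}$ is the multiplicative one, which has height $1$, one can choose a complex orientation of $\tmf_1(3)_{(2)}$ compatible with $\tilde c$ under which the composite $MU\to\tmf_1(3)_{(2)}\to ku_{(2)}$ sends the Hazewinkel generator $v_2$ to zero and $v_1$ to $v_1$. Because $\pi_*\tmf_1(3)_{(2)}=\mb Z_{(2)}[v_1,v_2]$ is a polynomial ring, $v_2\in\pi_6\tmf_1(3)_{(2)}$ is a non-zero-divisor, so the cofiber of multiplication by $v_2$ exists as a $\tmf_1(3)_{(2)}$-module with coefficients $\mb Z_{(2)}[v_1]$; the map $\tilde c$ kills $v_2$, hence factors through a $\tmf_1(3)_{(2)}$-module map $\tmf_1(3)_{(2)}/v_2\to ku_{(2)}$ that is an isomorphism on homotopy groups, and so a weak equivalence.

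The chief difficulty I foresee is constructing $\tilde c$ as an $\mathcal{E}_\infty$-ring map. Obstruction theory readily yields a complex-oriented ring map, but upgrading to an $\mathcal{E}_\infty$-map demands the sheaf structure near the ramified cusp, which is precisely the role of the appendix on Morava's forms of $K$-theory. A parallel issue attends $o$: one must identify the $\mathcal{E}_\infty$-structure on $\tmf_1(3)_{(2)}$ supplied by Theorem~\ref{thm:reminder} with one arising functorially from a sheaf on $\overline{\mathcal{M}}_1(3)_{(2)}$. Once these two sheaf-theoretic inputs are in place, the remaining cohomological and homotopy-level claims reduce to standard, and in particular $v_n$-divisibility, calculations.
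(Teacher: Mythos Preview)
Your handling of the cohomology computation and the cofiber sequence is essentially what the paper does: Theorem~\ref{thm:coh-comp} shows that any generalized $\BPP{n}$ has cohomology ${\cal A}^*/\!/E(n)$, and the cofiber-sequence argument is exactly the factorization through $\tmf_1(3)/v_2$ followed by a homotopy-group check.

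The construction of the square, however, is where your proposal diverges sharply from the paper and where it has a real gap. You propose to build the maps by producing a sheaf of ${\cal E}_\infty$-ring spectra on $\overline{\cal M}_1(3)_{(2)}$ compatible with the Goerss--Hopkins--Miller sheaf on $\overline{\cal M}$, and then to read off $o$, $\tilde c$, and the commutativity from the diagram of stacks. The paper does \emph{not} do this, and for good reason: at the time, no such sheaf on $\overline{\cal M}_1(3)$ with the required compatibility was available, and Theorem~\ref{thm:reminder} was proved by direct obstruction theory rather than as global sections of anything. You correctly flag this as ``the chief difficulty,'' but you do not indicate how to resolve it; producing that sheaf and the map of sheaves is at least as hard as the theorem you are trying to prove. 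The appendix's sheaf of forms of $K$-theory lives over $\multmod$ and $\fmultmod$, not over $\overline{\cal M}_1(3)$, so it does not by itself furnish $\tilde c$; in the paper it is invoked only to discuss the \emph{unramified} cusp and its twisted form $KU^\tau$.

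What the paper actually does (Section~\ref{sec:constructmaps}) is assemble the square chromatically. The $K(2)$-localizations of $ko$ and $ku$ vanish, so the $K(2)$-local square reduces to the inclusion $E^{hG_{48}}\to E^{hS_3}$ of homotopy fixed points. The $K(1)$-local square is built by Goerss--Hopkins obstruction theory: one writes down the target diagram of $\psth$-algebras (diagram~(\ref{diag:psthdiagram})), checks the relevant obstruction groups vanish (Proposition~\ref{prop:mappings}), and then lifts to ${\cal E}_\infty$-maps; the map $c_{K(1)}$ to $KO$ requires a separate cell-attachment argument (Proposition~\ref{prop:TMFtoKO}) because $KO$ is not complex orientable. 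These are glued via a chromatic fracture square to a $K(1)\vee K(2)$-local diagram, then glued to an explicitly constructed rational diagram (Theorem~\ref{thm:rationalcube}), and finally one takes connective covers. Each step is concrete and uses only the obstruction-theoretic input already established in \cite{lawsonnaumann}. Your sheaf-theoretic outline, while morally the ``right'' picture, substitutes a harder unsolved problem for this explicit construction.
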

We note that the restriction to the ramified cusp of $\overline{{\cal M}}_1(3)$
is not material in the above discussion.  The unramified cusp
also gives rise to a commutative diagram of the same form and with
similar properties.  However, the spectrum in the lower-right corner
is no longer the connective $K$-theory spectrum $ku_{(2)}$ if
multiplicative structure is taken into account, but instead
corresponds to a Galois twist of the multiplicative formal group law
which is defined over $\mb Z[1/3]$.  We discuss the modifications
necessary to use this form of $K$-theory in an appendix. Its periodic
version is most easily described as a homotopy fixed point spectrum:
\[
KU^\tau = (KU \smsh{} \mb S_{(2)}[\omega])^{hC_2}
\]
Here $\mb S_{(2)}[\omega]$ is obtained by adjoining third roots
of unity to the $2$-local sphere spectrum.  The generator of the
cyclic group $C_2$ acts by $\psi^{-1} \smsh{} \sigma$, where
$\psi^{-1}$ is the Adams operation associated to complex conjugation
and $\sigma$ is complex conjugation acting on the roots of unity.

We conclude with a short overview of the content.  In
Section~\ref{sec:elliptic-curve} we collect basic facts about the
moduli of generalized elliptic curves equipped with a
$\Gamma_1(3)$-structure.  In Section~\ref{sec:constructmaps} we
construct the ${\cal E}_\infty$-maps in Theorem~\ref{thm:main} using
Goerss-Hopkins obstruction theory and chromatic fracture squares.
While familiar to the experts, we found it worthwhile to spell out the
details of the $K(1)$-local obstruction theory.  Section
\ref{sec:cohomology} shows $H^*(\tmf_1(3)_{(2)},\mb F_2)\cong {\cal
  A}^*/\!/ E(2)$ by showing more generally that every generalized
$\BPP{n}$ has the same cohomology as the standard $\BPP{n}$
(Definition \ref{def:generalbpn} and Theorem \ref{thm:coh-comp}).
Section \ref{sec:proof} collects these results into a proof of Theorem
\ref{thm:main}.  The appendix discusses forms of $K$-theory and the
changes necessary to realize diagram~(\ref{eq:main-diagram}) using the
unramified cusp rather than the ramified one.

\begin{conv}
With the exception of the appendix, throughout this paper we will
  work in the category of $2$-local spectra.  In particular, the
  names $\tmf$, $\tmf_1(3)$, $ko$, $ku$, and the like denote
  $2$-localizations.
\end{conv}

The authors would like to thank several people: Gerd Laures for
help with the proof of Proposition \ref{prop:TMFtoKO}; Matthew
Ando and Paul Goerss for discussion relating to the
$\tmf$-orientations of $KO$ and $KO\pow{q}$; Andrew Baker for
discussions relating to uniqueness of $\BPP{n}$; Andrew Baker and
Justin Noel for indicating a generalization, and simplification, of
the argument for Theorem~\ref{thm:coh-comp}; John Rognes and
Robert Bruner for motivation and discussions relating to the material
of this paper; and referee for several helpful suggestions.

\section{The $2$-localized moduli $\overline{{\mathcal M}}_1(3)_{(2)}$}\label{sec:elliptic-curve}

This section collects basic facts about the moduli of generalized
elliptic curves with a $\Gamma_1(3)$-structure.  It provides
background for the somewhat ad-hoc construction of \cite[Section
8.2]{lawsonnaumann}, where it was shown that a certain formal
  group law over $\mb Z_{(2)}[A,B]$, to be recalled presently, defines a
$\BPP{2}$-realization problem at the prime $2$ which can be
  solved. We will focus here on determining the ordinary and
supersingular loci of the moduli stack, as those dictate the
chromatic properties of $\tmf_1(3)$.

Consider the curve ${\cal E}\subseteq {\mathbb
  P}^2_{\mathbb{Z}_{(2)}[A,B]}$ defined by the affine Weierstrass equation
\begin{equation}\label{eq:Weierstrass}
y^2 + Axy + By = x^3
\end{equation}
over the graded ring $\mb Z_{(2)}[A,B]$, where $|A| = 1$, $|B| = 3$.
There is a $3$-torsion point at the origin $(0,0)$, and conversely
this Weierstrass form is forced by the requirements that $(0,0)$ is a
$3$-torsion point whose tangent line is horizontal; the elliptic curve
${\cal E}$ is the universal generalized elliptic curve with a choice
of $3$-torsion point (namely $[0:0:1]$) and a choice of invariant
differential $dy/3x^2$ by \cite[Proposition 3.2]{mahowald-rezk}.  The
grading, which can be interpreted as acting on the invariant
differential, is reflected in the action of the multiplicative group
$\mb G_m = \Spec(\mb Z[\lambda^{\pm 1}])$ determined by
\[
A \mapsto \lambda A, B \mapsto \lambda^3 B.
\]
This lifts to a $\mb G_m$-action on ${\cal
  E}/\Spec(\mathbb{Z}_{(2)}[A,B])$ via
\[
(x,y) \mapsto (\lambda^2 x,\lambda^3 y).
\]
The curve ${\cal E}$ has additive reduction only at $(A,B) = (0,0)$.

\begin{prop}
\label{prop:reduction}
The restriction of ${\cal E}$ to $\mb A^2_{\mb Z_{(2)}}\setminus\{ (0,0)\}$
is a generalized elliptic curve with irreducible geometric fibers
as follows:
\begin{enumerate}[i)]
\item A nodal curve of arithmetic genus one if $A^3=27B$ or $B=0$.
\item A supersingular elliptic curve if $A=0$ and $2=0$.
\item An ordinary elliptic curve otherwise.
\end{enumerate}
\end{prop}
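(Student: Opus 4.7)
The plan is to reduce Proposition \ref{prop:reduction} to a direct computation of the Weierstrass invariants $c_4$ and $\Delta$ of the curve (\ref{eq:Weierstrass}), and then read off the three cases from where these invariants vanish. Only the ordinary/supersingular dichotomy on the smooth locus in residue characteristic $2$ requires an additional ingredient, namely the $j$-invariant criterion for supersingularity.

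I would begin by substituting $a_1=A$, $a_3=B$, $a_2=a_4=a_6=0$ into the standard Weierstrass formulas to obtain
\[
c_4 \;=\; A(A^3 - 24B), \qquad \Delta \;=\; B^3(A^3 - 27B).
\]
The singular locus of $\mathcal{E}$ in $\mb A^2_{\mb Z_{(2)}}$ is then $V(\Delta) = V(B)\cup V(A^3 - 27B)$, which is exactly the locus appearing in case i). To see that the reduction is nodal rather than additive off the origin, I would evaluate $c_4$ on each component: on $V(B)\setminus\{(0,0)\}$ one has $A\neq 0$ and $c_4 = A^4 \neq 0$; on $V(A^3-27B)\setminus\{(0,0)\}$, both $A$ and $B$ must be nonzero at every geometric point (the residue characteristics of $\mb Z_{(2)}$ are $0$ and $2$, in each of which $27$ is a unit), and so $c_4 = A(27B - 24B) = 3AB \neq 0$. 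Hence every singular geometric fiber over $\mb A^2\setminus\{(0,0)\}$ has $\Delta=0$ and $c_4\neq 0$; the standard Kodaira classification of Weierstrass cubics then identifies each such fiber as a curve of type $I_1$, i.e.\ an irreducible rational nodal plane cubic of arithmetic genus one with smooth locus $\mb G_m$. This dispatches case i).

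For cases ii) and iii), I would restrict to the smooth locus $\Delta\neq 0$. In residue characteristic $0$, supersingularity is vacuous, so every such fiber is ordinary. In residue characteristic $2$, a smooth elliptic curve is supersingular if and only if its $j$-invariant vanishes, and on the smooth locus this is equivalent to $c_4 = 0$; since $24\equiv 0\pmod 2$, the formula for $c_4$ collapses to $A^4$ in characteristic $2$, so $c_4 = 0$ precisely when $A\equiv 0\pmod 2$. This gives case ii) and, by complement, case iii).

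I do not anticipate any substantial obstacle beyond the routine Weierstrass bookkeeping. The points meriting care are (a) confirming that the two components of $V(\Delta)$ do not meet off $\{(0,0)\}$, which uses $3\in\mb Z_{(2)}^\times$ and makes $c_4 = 3AB$ nonzero on $V(A^3-27B)\setminus\{(0,0)\}$; and (b) invoking the Kodaira $I_1$ classification to secure geometric irreducibility of the nodal fibers (rather than reproving it directly from the defining equation).
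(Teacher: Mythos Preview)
Your proposal is correct and follows essentially the same approach as the paper: compute $c_4 = A(A^3-24B)$ and $\Delta = B^3(A^3-27B)$, check that they have no common zero on $\mb A^2_{\mb Z_{(2)}}\setminus\{(0,0)\}$, and then use that $j=0$ is the unique supersingular $j$-invariant in characteristic $2$. Your write-up is in fact a bit more explicit than the paper's, spelling out the case analysis showing $c_4\neq 0$ on each component of $V(\Delta)\setminus\{(0,0)\}$ and phrasing the irreducibility of the nodal fibers via the Kodaira $I_1$ classification rather than the Deligne--Rapoport criterion, but the substance is identical.
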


This, in particular, expresses $A$ as a lift of the Hasse invariant
$v_1$, which is well-defined mod $2$; the element $B$ is a lift of
$v_2$, which is well-defined mod $(2,v_1)$.

\begin{proof} 
To say that ${\cal E}/(\mb A^2_{\mb Z_{(2)}}\setminus\{ (0,0)\})$
is a generalized elliptic curve with irreducible fibers
in the sense of \cite[Chapitre I, D\'efinition 1.12]{deligne-rapoport} means that the modular
quantities $c_4({\cal E})$ and $\Delta({\cal E})$ have no common zero
on $\mb A^2_{\mb Z_{(2)}} \setminus\{ (0,0)\}$. This results from the
following computations:
\begin{eqnarray*}
c_4({\cal E})&=& A(A^3-24B)\\
\Delta({\cal E})&=&B^3(A^3-27B)\\
j({\cal E}[\Delta^{-1}])&=&\frac{A^3(A^3-24B)^3}{B^3(A^3-27B)}\\
\end{eqnarray*}
A geometric fiber is a nodal curve if and only if $\Delta=0$, so i)
is clear and ii) and iii) follow by recalling that the only
supersingular $j$-invariant in characteristic $2$ is $j=0$.
\end{proof}

The $\mb G_m$-action on ${\cal E}$ over $\mb A^2_{\mb Z_{(2)}} \setminus
\{ (0,0)\}$ descends to determine a generalized elliptic curve over
the quotient stack
\[
\overline{{\cal M}}_1(3)_{(2)} \cong \left[ (\mb A^2_{\mb Z_{(2)}}
  \setminus \{ (0,0)\})\ /\!/\ \mathbb{G}_m\right].
\]
The
stack $\overline{{\cal M}}_1(3)_{(2)}$ is a stack with coarse moduli
isomorphic to the weighted projective space $\Proj(\mb Z_{(2)}[A,B])$.
Since the zero section of a generalized elliptic curve lies in the
smooth locus, we have associated with ${\cal E}$ a 1-dimensional
formal group $\widehat{\cal E}/\overline{{\cal M}}_1(3)_{(2)}$.

The function $-x/y$ is a coordinate in a neighborhood of $\infty$ on
${\cal E}$, and this gives an isomorphism between the pullback of the
relative cotangent bundle $\omega$ of $\widehat{\cal E}/\overline{\cal
M}_1(3)$ along the zero section and the tautological line bundle
${\cal O}(1)$ on $\overline{\cal M}_1(3)$.  Compatibility with the
grading implies that the formal group $\widehat{\cal E}$ comes from a
graded formal group law, and is induced by a map of graded rings $MU_*
\to \mb Z_{(2)}[A,B]$.  Here we follow the standard convention that
elements in algebraic grading $k$ lie in topological grading $2k$.
The elements $A$ and $B$ can then be interpreted as global sections:
\begin{eqnarray*}
A &\in& H^0(\overline{\cal M}_1(3), {\cal O}(1))\\
B &\in& H^0(\overline{\cal M}_1(3), {\cal O}(3))
\end{eqnarray*}
Let $\mb Z_{(2)}[A,B] \to \mb Z_{(2)}[b]$ be the ungraded map given by
$A \mapsto 1$, $B \mapsto b$.  The composite map
\[
\Spec(\mb Z_{(2)}[b]) \to \mb A^2_{\mb Z_{(2)}} \setminus \{ (0,0)\}
\to \overline{\cal M}_1(3)
\]
is an open immersion and thus determines an affine coordinate chart
$V$ of $\overline{\cal M}_1(3)$.  On this chart the elliptic curve
$y^2 + xy + by = x^3$ has no supersingular fibers by Proposition
\ref{prop:reduction},
ii).

\begin{prop}
\label{prop:restriction}
  The restriction of $\widehat{\cal E}$ to $V=\Spec(\mb Z_{(2)}[b])$ 
  is a formal $2$-divisible group whose mod-$2$ reduction has
  constant height $1$.
\end{prop}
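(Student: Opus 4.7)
The plan is to apply Proposition~\ref{prop:reduction} pointwise on $V$ and identify the height of $\widehat{\cal E}$ on each geometric fiber in characteristic $2$. Since $V$ corresponds to $A = 1$, the supersingular case (ii) of Proposition~\ref{prop:reduction} is automatically excluded. Hence every geometric fiber of ${\cal E}|_V$ over $V \otimes \mb F_2 = \Spec(\mb F_2[b])$ is either an ordinary elliptic curve or an irreducible nodal cubic (the latter occurring exactly at $b=0$ and $b=1$, since $A^3 - 27B = 1 - 27b$ reduces to $1 - b$ mod $2$).

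Next I would identify the formal group on each such fiber. An ordinary elliptic curve in characteristic $p$ has formal group of height exactly $1$ by definition. For an irreducible nodal generalized elliptic curve (a N\'eron $1$-gon in the sense of \cite{deligne-rapoport}), the identity section lies in the smooth locus $\mb G_m$, and the formal completion along it is the multiplicative formal group $\widehat{\mb G}_m$, whose $[2]$-series $(1+T)^2 - 1 = 2T + T^2$ likewise has height $1$. Combining the two cases, the mod-$2$ reduction of $\widehat{\cal E}|_V$ has fiberwise constant height $1$.

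To deduce $2$-divisibility, I would note that constant height $1$ mod $2$ means the $T^2$-coefficient of $[2]_{\widehat{\cal E}}$ modulo $2$ is a unit at every point of $\Spec(\mb F_2[b])$, and hence a unit in $\mb F_2[b]\pow{T}$. Weierstrass preparation then exhibits $[2]_{\widehat{\cal E}}$ as a finite flat isogeny of degree $2$ of $\widehat{\cal E}|_V$, which is precisely the condition that $\widehat{\cal E}|_V$ is a formal $2$-divisible group of height $1$.

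The step I expect to be the main obstacle is the identification of the formal group at the nodal fibers as $\widehat{\mb G}_m$: although this is classical for generalized elliptic curves with irreducible geometric fibers, it requires a careful appeal to the theory of N\'eron polygons in Deligne-Rapoport to make the smooth-locus identification precise enough to compute the formal group there.
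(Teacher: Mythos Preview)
Your argument is correct and is essentially the paper's own approach, just worked out in more detail. The paper's proof is a one-liner: it cites Proposition~\ref{prop:reduction} to conclude that $\widehat{\cal E}|_V$ has constant height $1$ mod $2$, and immediately deduces $2$-divisibility. Your version makes explicit the two sub-cases (ordinary and nodal) and the identification of the formal completion at the nodal fibers with $\widehat{\mb G}_m$, which the paper leaves implicit; the Weierstrass-preparation step you add for $2$-divisibility is also left unsaid in the paper.
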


\begin{proof}
  By Proposition~\ref{prop:reduction} the restriction
  $\widehat{\cal E}|_V$ is a $1$-dimensional formal group of constant
  height $1$, so it is $2$-divisible.
\end{proof}

Observe that $V\subseteq \overline{\mathcal M}_1(3)$ is the maximal
open substack over which ${\mathcal E}$ is ordinary.

Let $\mb Z_{(2)}[A,B] \to \mb Z_{(2)}[a]$ be the map given by $A
\mapsto a$, $B \mapsto 1$.  The composite map
\[
\Spec(\mb Z_{(2)}[a]) \to \mb A^2_{\mb Z_{(2)}} \setminus \{ (0,0)\}
\to \overline{\cal M}_1(3)
\]
is \'etale.  The stack-theoretic image is the quotient of
$\Spec(\mb Z_{(2)}[a])$ by the action of the group $\mu_3$ of
  third roots of unity, given by $\omega \cdot a  = \omega a$ for
$\omega$ any third root of unity.  The induced map
\[
W= \left[ \Spec(\mb Z_{(2)}[a]) /\!/ \mu_3 \right] \to \overline{\cal
  M}_1(3)
\]
is an open immersion, and so $\Spec(\mb Z_{(2)}[a])$ determines an
\'etale coordinate chart for the stack near $a=0$.  On this chart the
elliptic curve is defined by the Weierstrass equation $y^2 + axy
+ y = x^3$, with $\mu_3$-action given by $\omega \cdot (x,y)
  = (\omega^2 x, y)$.

Let $U$ be the formal scheme $\Spf(\mb Z_{2}\pow{a})$, with formally
\'etale map $U \to \overline{\cal M}_1(3)$.  By Proposition
\ref{prop:reduction}, the pullback of ${\cal E}$ to $U$ has special
fiber a supersingular elliptic curve.  We denote by $\mb G/U$ the
$2$-divisible group of ${\cal E}|_U$.

\begin{prop}
\label{prop:universal-def}
The $2$-divisible group $\mb G/U$ has height $2$
  and is a universal deformation of its special fiber.
\end{prop}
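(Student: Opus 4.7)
The plan is to settle the height claim directly from the supersingular computation of Proposition~\ref{prop:reduction}, and then to establish the universal deformation property by combining Serre-Tate and Lubin-Tate theories with the étaleness of the forgetful map from $\overline{\cal M}_1(3)_{(2)}$.

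For the height, I would identify the special fiber of $\mb G$ by specializing $a=0$ and reducing mod $2$: this yields the $2$-divisible group of the supersingular elliptic curve $y^2+y=x^3$ over $\mb F_2$ (Proposition~\ref{prop:reduction}~ii)), which is connected of height $2$. Since the $2$-divisible group of every elliptic curve has constant height $2$, the height claim for $\mb G/U$ follows immediately.

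For the universal deformation claim, write $\mb G_0$ for the special fiber, a height-$2$ formal group over $\mb F_2$. By Lubin-Tate theory its universal deformation is prorepresented by $R=W(\mb F_2)\pow{u_1}=\mb Z_2\pow{u_1}$. By Serre-Tate theory, the formal deformation functor of the supersingular elliptic curve ${\cal E}_0$ is canonically equivalent to that of $\mb G_0$, so the formal completion of $\overline{\cal M}_{(2)}$ at ${\cal E}_0$ is also isomorphic to $\Spf(R)$. On the other hand, the classifying map of ${\cal E}|_U$ factors as $U\to\overline{\cal M}_1(3)_{(2)}\to\overline{\cal M}_{(2)}$: the first arrow is formally étale because $U$ is the formal completion at $a=0$ of the étale chart $\Spec(\mb Z_{(2)}[a])\to\overline{\cal M}_1(3)_{(2)}$, and the second is étale because $3$ is invertible at the prime $2$ so that the $\Gamma_1(3)$-level structure is étale. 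Passing to formal neighborhoods yields a formally étale local morphism of complete Noetherian local $\mb Z_2$-algebras
\[
R=\mb Z_2\pow{u_1}\longrightarrow\mb Z_2\pow{a},
\]
between two $1$-dimensional power series rings over $\mb Z_2$, each with residue field $\mb F_2$. Comparing the mod-$2$ cotangent spaces $\mf m/(\mf m^2,2)$, both $1$-dimensional over $\mb F_2$, formal étaleness forces this map to be an isomorphism on cotangent spaces, and hence an isomorphism of rings by a Nakayama argument for power series rings. Thus $\mb G/U$ is pulled back from the universal deformation over $\Spf(R)$, and is itself a universal deformation of $\mb G_0$.

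The main obstacle I expect is the precise invocation of Serre-Tate (for supersingular elliptic curves, the deformation functors of the curve and of its $p$-divisible group coincide) together with the verification that the forgetful map $\overline{\cal M}_1(3)_{(2)}\to\overline{\cal M}_{(2)}$ is étale at the supersingular point in characteristic $2$; both are standard once one notes that $3$ is a unit at $p=2$ so that the $\Gamma_1(3)$-level structure is an étale rigidification. Once these inputs are in place, the identification of the two formal neighborhoods is a routine cotangent-space comparison.
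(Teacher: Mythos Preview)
Your argument is correct and supplies what the paper omits: the paper's proof is simply a citation to \cite[Proposition~8.2 and Remark~4.2]{lawsonnaumann}, whereas you give the standard self-contained argument via Serre--Tate, Lubin--Tate, and étaleness of the $\Gamma_1(3)$-level structure away from~$3$. This is almost certainly the content of the cited reference as well, so there is no genuine difference in strategy, only in what is written out.

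One small imprecision worth tightening: you write that ``the formal completion of $\overline{\cal M}_{(2)}$ at ${\cal E}_0$ is isomorphic to $\Spf(R)$'' and then pass directly to a ring map $R\to\mb Z_2\pow{a}$. Since $\overline{\cal M}$ is a stack and ${\cal E}_0$ has nontrivial automorphism group scheme, its formal completion is the quotient $[\Spf(R)/\!/\Aut({\cal E}_0)]$, not $\Spf(R)$ on the nose. The cleanest way to bypass this is to avoid the stack entirely and work with deformation functors: the map $R\to\mb Z_2\pow{a}$ already exists by the universal property of Lubin--Tate, and the chain of equivalences ${\rm Def}(\mb G_0)\cong{\rm Def}({\cal E}_0)\cong{\rm Def}({\cal E}_0,P_0)$ (Serre--Tate, then étaleness of $3$-torsion) identifies its tangent map with that of $U\to{\rm Def}({\cal E}_0,P_0)$, which is an isomorphism because $\Spec(\mb Z_{(2)}[a])\to\overline{\cal M}_1(3)$ is étale. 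Alternatively, one can verify directly that the first-order deformation $y^2+\epsilon xy+y=x^3$ over $\mb F_2[\epsilon]$ is nontrivial. Either route makes your cotangent-space comparison rigorous without needing to discuss formal completions of stacks.
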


\begin{proof}
This is a restatement of \cite[Proposition 8.2 and Remark 4.2]{lawsonnaumann}.
\end{proof}

The common overlap of the coordinate charts $V$ and $W$ is determined by the
identity $a^3 b = 1$.  The Mayer-Vietoris sequence for this weighted
projective space, using these affine coordinate charts, allows us to
compute the cohomology of $\overline{\cal M}_1(3)_{(2)}$ with
coefficients in ${\cal O}(*)$.

\begin{cor}
\label{cor:moduli-cohomology}
  The cohomology of $\overline{\cal M}_1(3)$ with coefficients in
  ${\cal O}(*)$ vanishes above degree $1$.

  The cohomology groups $H^0(\overline{\cal M}_1(3), {\cal O}(*))$
  form the graded ring $\mb Z_{(2)}[A,B]$.

  The cohomology $H^1(\overline{\cal M}_1(3), {\cal O}(*))$ is the
  module $\mb Z_{(2)}[A,B]/(A^\infty, B^\infty)$ of elements $A^{-n}
  B^{-m} D$, where $D$ is a duality class in $H^1(\overline{\cal
    M}_1(3), {\cal O}(-4))$ annihilated by $A$ and $B$.
\end{cor}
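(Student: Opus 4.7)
The plan is to execute the Čech computation indicated in the statement, using the cover of $\overline{\cal M}_1(3)_{(2)}$ by the two open charts $V=\Spec(\mb Z_{(2)}[b])$ and $W=[\Spec(\mb Z_{(2)}[a])/\mu_3]$ set up above, together with their overlap $V\cap W$, which is the $\mb G_m$-quotient of $\Spec(\mb Z_{(2)}[A^{\pm 1},B^{\pm 1}])$. Since ${\cal O}(*)$ records the $\mb G_m$-equivariant structure, it suffices to compute the quasi-coherent cohomology of $X:=\mb A^2_{\mb Z_{(2)}}\setminus\{(0,0)\}$ with respect to its open cover by $\{A\neq 0\}$ and $\{B\neq 0\}$, and then read off weight pieces under $|A|=1$, $|B|=3$.

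Before doing so I would verify that the Čech complex computes sheaf cohomology, which requires the vanishing of higher cohomology of ${\cal O}(*)$ on $V$, on $W$, and on $V\cap W$. The charts $V$ and $V\cap W$ are affine, and $W$ is the quotient of an affine scheme by $\mu_3$; since $3$ is invertible in $\mb Z_{(2)}$, the functor of $\mu_3$-invariants is exact and quasi-coherent cohomology on $W$ is concentrated in degree zero. Hence the cohomology of the stack with values in ${\cal O}(*)$ is computed by the two-term Čech complex and vanishes above degree $1$.

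After pullback to $X$ and taking all weights simultaneously, the Čech differential becomes the difference map
\[
\mb Z_{(2)}[A^{\pm 1},B]\oplus \mb Z_{(2)}[A,B^{\pm 1}]\longrightarrow \mb Z_{(2)}[A^{\pm 1},B^{\pm 1}].
\]
Its kernel is the intersection $\mb Z_{(2)}[A,B]$, yielding the claim on $H^0$. Its cokernel is spanned by the monomials $A^iB^j$ with $i,j\le -1$, since any monomial with a nonnegative exponent already lies in one of the two summands of the source. Setting $D:=A^{-1}B^{-1}$, a direct check shows that $A\cdot D=B^{-1}$ and $B\cdot D=A^{-1}$ both lie in the image and hence vanish in the cokernel, while every cokernel element has a unique expression $A^{-n}B^{-m}D$ with $n,m\ge 0$; the $\mb G_m$-weight of $D$ is $-(1+3)=-4$, identifying $H^1$ with the module asserted. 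The main point requiring care is the vanishing on the stacky chart $W$, which is really the only place where the stack structure (as opposed to a scheme structure) intervenes in the argument; everything else is the classical local-cohomology computation for $\mb A^2$ punctured at the origin, graded by $\mb G_m$-weight.
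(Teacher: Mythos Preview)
Your proposal is correct and is exactly the computation the paper has in mind: the text immediately preceding the corollary says only that ``the Mayer-Vietoris sequence for this weighted projective space, using these affine coordinate charts, allows us to compute the cohomology,'' and you have faithfully carried out that \v{C}ech/Mayer--Vietoris calculation, including the one nontrivial verification (acyclicity on the stacky chart $W$ via exactness of $\mu_3$-invariants over $\mb Z_{(2)}$) that the paper leaves implicit.
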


\section{Constructing the maps}\label{sec:constructmaps}

The goal of this section is to construct the ${\cal E}_\infty$-maps of
  Theorem~\ref{thm:main}, diagram~(\ref{eq:main-diagram}),
  which we reproduce here:
\[
\xymatrix{ \tmf \ar[r]^{c}\ar[d]^{o} & ko\ar[d]^{\iota} \\
\tmf_1(3) \ar[r]^{\tilde{c}} & ku}
\]
All spectra appearing in this diagram are the connective covers of
their $K(0)\vee K(1)\vee K(2)$-localizations.  Accordingly, we will
construct the required ${\cal E}_\infty$-maps using two chromatic
fracture squares, followed by taking connective covers.

\subsection{The $K(2)$-local maps}\label{subsec:k2-local-maps}

We identify the $K(2)$-localizations of the connective spectra in
Theorem \ref{thm:main} as follows.  We have that $L_{K(2)}KU\simeq *$,
and as $K(n)$-localization does not distinguish between a spectrum and
its connective cover we have $L_{K(2)}ku \simeq *$ as well. From the
familiar fibration
\begin{equation}\label{eq:eta}
\Sigma ko\stackrel{\eta}{\to} ko\to ku,
\end{equation}
it follows that the nilpotent map $\eta$ induces an equivalence on
$L_{K(2)}ko$, hence $L_{K(2)}ko\cong *$ as well.

Let $E$ denote the Lubin-Tate spectrum associated with the formal
group of the supersingular elliptic curve
\[
C\co y^2+y=x^3
\]
over $\mb F_4$ \cite[Section 7]{goerss-hopkins}.  The group
$G_{48}=Aut(C/\mathbb{F}_4) \rtimes Gal(\mb F_4/\mb F_2)$ acts on $E$
and $L_{K(2)}\tmf\simeq E^{hG_{48}}$ \cite{hopkins-mahowald}.  The
subgroup $\langle\omega\rangle\subseteq Aut(C/\mb F_{4})$ fixing the
point at infinity on $C$ (whose generator sends $(x,y)$ to $(\omega^2
x, y)$) is cyclic of order $3$ and defines a subgroup
\begin{equation}\label{subgroup} 
S_3 \cong \langle\omega\rangle \rtimes Gal(\mb F_4/\mb F_2)\subseteq G_{48}.
\end{equation}
We have $L_{K(2)}\tmf_1(3)\simeq E^{hS_3}$ by the
construction of $\tmf_1(3)$ \cite[proof of Theorem
4.4]{lawsonnaumann}.

We define the $K(2)$-localizations of the maps from
  diagram~(\ref{eq:main-diagram}) as follows:
\[
\xymatrix{ E^{hG_{48}} \ar[r] \ar[d]_{o_{K(2)}} & \ast \ar[d]\\
E^{hS_3}\ar[r] & \ast }
\]
The map $o_{K(2)}$ is defined to be the canonical map of homotopy
fixed point spectra associated with the inclusion of
  equation~(\ref{subgroup}).  

\subsection{The $K(1)$-local maps}\label{subsec:K1-local-maps}

We refer the reader to
\cite{laures,hopkins,ando-hopkins-strickland}, as well as
\cite[Sections 5 and 6]{lawsonnaumann} and references therein,
for an account of basic results about $K(1)$-local ${\cal
  E}_\infty$-ring spectra which we will use freely.

To ease reading, in this subsection only we will abbreviate
\begin{equation}
\label{eq:k1-local-spectra}
\begin{split}
\TMF &=L_{K(1)}\tmf,\\
\TMF_1(3) &=L_{K(1)}\tmf_1(3),\\
KO &=L_{K(1)}ko, \text{ and}\\
K &=L_{K(1)}ku.
\end{split}
\end{equation}
Furthermore, all smash products will implicitly be $K(1)$-localized
and all abelian groups implicitly $2$-completed.  We use $K^\vee_*(-)$
to denote ($K(1)$-localized) $K$-homology, so that
\[
K^\vee_*(-)=\pi_*L_{K(1)}(K\wedge -).
\]
In order to construct the required maps between the ${\cal
E}_\infty$-ring spectra in equation~(\ref{eq:k1-local-spectra}),
we first construct maps between the $\psth$-algebras given by
their $K^\vee$-homology.
\begin{prop}
All spectra in equation~(\ref{eq:k1-local-spectra}) have $K^\vee_*$
concentrated in even degrees, and there are isomorphisms of
$\psth$-algebras as follows. 
\begin{align}
K^\vee_0(\TMF) &\cong V \label{eq:khomoftmf} \\
K^\vee_0(KO) &\cong \Hom_c(\mb Z_2^\times/\{ \pm 1\}, K_0)\label{eq:khomofko}\\
\label{eq:k_homology_of_k} K^\vee_0(K) &\cong \Hom_c(\mb Z_2^\times, K_0)
\end{align}
Here $V$ is (the level $1$-analogue of) Katz's ring of generalized
  modular functions \cite[(1.4.9.1)]{katzhigher}, where it is denoted
  $V_{\infty,\infty}$.

These fit into a commutative diagram of $\psth$-algebras as follows:
\begin{equation}\label{diag:psthdiagram}
\xymatrix{
K_0^\vee(\TMF)\cong V \ar[r]^-a \ar[d]^d &
Hom_c(\mb Z_2^\times/\{ \pm 1\}, K_0)\cong K_0^\vee(KO) \ar[d]^b \\
K_0^\vee(\TMF_1(3)) \ar[r]^-c &
Hom_c(\mb Z_2^\times, K_0)\cong K_0^\vee(K)}
\end{equation}
\end{prop}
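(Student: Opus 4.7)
The plan is to compute each $K^\vee_0$ geometrically by identifying it with the ring of functions on the appropriate Igusa tower, and then to deduce commutativity of (\ref{diag:psthdiagram}) from the corresponding commutative square of moduli. Each of the four spectra is obtained as $K(1)$-local sections of a sheaf of $\mathcal{E}_\infty$-rings on (the ordinary locus of) a moduli of elliptic curves, so each $K_0^\vee$ will appear as continuous functions on a pro-\'etale Igusa cover whose Galois group is a quotient of the height-one Morava stabilizer $\mb Z_2^\times$. The $\psth$-structures will arise from this Galois action, and vanishing of the odd groups will follow from the corresponding affineness.

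First I would dispose of the two right-hand entries. The identification $K_0^\vee(K) \cong \Hom_c(\mb Z_2^\times, K_0)$, with odd part zero, is the classical description of the continuous cooperations on $K(1)$-local $K$-theory in terms of Adams operations. The identification $K_0^\vee(KO) \cong \Hom_c(\mb Z_2^\times/\{\pm 1\}, K_0)$ then follows from $KO \simeq K^{h\{\pm 1\}}$: one can run the relevant homotopy-fixed-point descent spectral sequence, or smash the fiber sequence (\ref{eq:eta}) with $K$ and localize, noting that $\eta$ acts nullhomotopically on the resulting even cells. Complexification $\iota$ induces the map $b$, namely pullback along the quotient $\mb Z_2^\times \twoheadrightarrow \mb Z_2^\times/\{\pm 1\}$.

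Second I would identify $K_0^\vee(\TMF) \cong V$. The spectrum $\TMF$ is the global sections of the restriction of $\mathcal{O}^{top}$ to the ordinary locus of the $2$-adic completion of $\overline{\mathcal M}$, and the Igusa tower over this ordinary locus is pro-\'etale with Galois group $\mb Z_2^\times$ and limit $\Spf(V)$ by Katz's construction of $V_{\infty,\infty}$. Smashing with $K$ and $K(1)$-localizing realizes this Igusa tower topologically: the descent spectral sequence converging to $\pi_*L_{K(1)}(K \wedge \tmf)$ degenerates because the ordinary locus is affine, giving $K_0^\vee(\TMF) \cong V$ concentrated in even degrees. The $\mb Z_2^\times$-action on the Igusa tower recovers Katz's $\psi$ and $\theta$ operations on $V$, matching the $\psth$-structures.

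For the left-bottom corner I would argue analogously: $\TMF_1(3)$ is the $K(1)$-local sections of $\mathcal{O}^{top}$ over the ordinary locus of $\overline{\mathcal M}_1(3)_{(2)}$, which by Proposition \ref{prop:restriction} is the affine chart $\Spec(\mb Z_{(2)}[b])$. Its Igusa tower yields $K_0^\vee(\TMF_1(3))$ as a $\psth$-algebra concentrated in even degrees. The map $d$ comes from $o$ and corresponds to the moduli map $\overline{\mathcal M}_1(3)\to \overline{\mathcal M}$, pulling back Igusa sections; $c$ comes from the ramified cusp $\tilde c$ and lands in $\Hom_c(\mb Z_2^\times, K_0)$ via the Tate-curve description of the Igusa tower at the cusp; and $a$ is classical $q$-expansion of $p$-adic modular functions, landing in $\{\pm 1\}$-invariants because the cusp of $\overline{\mathcal M}$ carries a canonical $\{\pm 1\}$-automorphism. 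Commutativity of the square then follows from the commutative square of moduli stacks. The main obstacle is the careful matching of Katz's definition of $V$ and its $\psth$-structure with the continuous-function description coming from the Igusa tower, ensuring that the Galois-theoretic diamond operators $\psi$ and the canonical Frobenius lift $\theta$ on both sides agree.
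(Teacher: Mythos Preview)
Your geometric approach via Igusa towers is sound and in many ways more conceptually uniform than the paper's argument, but it differs in two important respects.

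First, a logical ordering issue: you write that ``$d$ comes from $o$'' and ``$c$ comes from the ramified cusp $\tilde c$'', but at this point in the paper the topological maps $o$ and $\tilde c$ have not yet been constructed.  The whole purpose of this proposition is to build the $\psth$-algebra diagram~(\ref{diag:psthdiagram}) \emph{first}, so that obstruction theory can then be used (in Proposition~\ref{prop:mappingspaces} and Corollary~\ref{cor:k1-diagram}) to lift it to a diagram of ${\cal E}_\infty$-ring spectra.  Your argument can be repaired simply by saying that $d$ is induced by the forgetful map of moduli $\overline{\cal M}_1(3)\to\overline{\cal M}$ on Igusa towers, and $c$ by the ramified-cusp map of stacks; but as written the dependence on $o$ and $\tilde c$ is circular.

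Second, the paper takes a rather different route to the isomorphism $K^\vee_0(\TMF)\cong V$.  Rather than running a descent spectral sequence over the ordinary locus, it quotes Laures' identification $KO^\vee_0\TMF\cong V$ together with his $K(1)$-local splitting of $\TMF$ as a wedge of copies of $KO$; combining these with the known structure of $K^\vee_*KO$ gives both even concentration and the isomorphism in one stroke.  Your direct Igusa-tower argument is a legitimate alternative, but note that the ordinary locus of $\overline{\cal M}$ is not literally an affine scheme (every elliptic curve has the automorphism $-1$), so ``degenerates because the ordinary locus is affine'' needs more care; what you actually want is that after smashing with $K$ one is computing sections over the (genuinely affine) limit of the Igusa tower.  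The paper also constructs the map $c$ rather explicitly and checks by hand that it commutes with $\psi^\alpha$ and $\psi^2$, using the canonical-subgroup description; in your framework this compatibility would have to be extracted from functoriality of the Igusa construction, which is your acknowledged ``main obstacle''.
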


\begin{proof}
First, we review the structure of these $\psth$-algebras.
According to \cite[Lemma 1]{hopkins} and \cite[Proposition
3.4]{lauressplitting}, or \cite[Proposition 9.2]{ando-hopkins-rezk},
we have isomorphisms of $\psth$-algebras as follows:
\begin{equation}\label{eq:structure-K-theory}
 K^\vee_*K\cong Hom_c(\mb Z_2^\times, K_*)
\end{equation}
\begin{equation}\label{eq:structure-K-KO}
 K^\vee_*KO\cong Hom_c(\mb Z_2^\times/\{ \pm 1\}, K_*)
\end{equation}
\begin{equation}\label{eq:structure-KO-theory}
KO^\vee_*KO\cong Hom_c(\mb Z_2^\times/\{ \pm 1\}, KO_*)
\end{equation}
Here we have obvious $\mb Z_2^\times$-actions and trivial
$\theta$, in the sense that the ring homomorphism $\psi^2$ is the
  identity.

Furthermore, the inclusion of the constant functions $K_*\subseteq
K_*^\vee K$ (resp. $K_*\subseteq K_*^\vee KO$), as the ring
of $\mb Z_2^\times$-invariants, is a split $\mb Z_2^\times$- (resp.
$\mb Z_2^\times/\{ \pm 1 \}$-) Galois extension.

Next, we consider $K_0^\vee(\TMF_1(3))$.  We have a generalized elliptic
curve
\begin{equation}
\label{eq:ordinary-curve}
E\co y^2+xy+by=x^3
\end{equation}
over $\comp{\mb Z_2[b]}_2$.  By construction \cite[Section
6.2]{lawsonnaumann}, the Hurewicz map
\[
\pi_0\TMF_1(3) \to K^\vee_0\TMF_1(3)
\]
has domain $\comp{\mb Z_2[b]}_2$, and is the $\mb Z_2^\times$-Galois
extension classifying isomorphisms $\widehat{\mb G}_m \to
\widehat{E}$.  We will refer to such an isomorphism as a
trivialization of the ordinary elliptic curve $E/\pi_0(\TMF_1(3))$.
The operation $\theta\co K^\vee_0(\TMF_1(3))\to K^\vee_0(\TMF_1(3))$ is
determined by the canonical subgroup of $E$ \cite[page 35]{gouvea} and
can be computed explicitly \cite[proof of Proposition
8.5]{lawsonnaumann}.  Still by construction, we also have $K_1^\vee
\TMF_1(3)=0$.

The $\psth$-algebra structure on $V$ is determined similarly.  The
ring $V$ carries a universal isomorphism class of trivialization of
its elliptic curve.  It has a continuous action of $\mb Z_2^\times$
(acting on the universal trivialization) and a canonical lift of
Frobenius $\psi^2$, which induces the natural transformation on $V$
determined by the quotient by the canonical subgroup.  Since $V$ is
torsion free, there is a unique self-map $\theta$ of $V$ such that
$\psi^2(x)=x^2+2\theta(x)$ for all $x\in V$. This gives the structure
of a $\psth$-algebra to $V$.

We will now establish the isomorphism of
  equation~(\ref{eq:khomoftmf}).  From \cite[Theorem 3 and
Proposition 1]{laures}, we know $KO^\vee_*\TMF\cong KO_*\otimes
KO^\vee_0 \TMF$ and $V\simeq KO^\vee_0 \TMF$ as $\psth$-algebras.

Since $\TMF$ is equivalent to a ($K(1)$-local) wedge of copies of
  $KO$ \cite[Corollary 3]{laures}, we find that
\begin{equation}\label{eq:khomsplitoftmf}
K^\vee_* \TMF \cong \widehat\oplus K^\vee_* KO.
\end{equation}
Therefore, by equation~(\ref{eq:structure-K-KO}) we find that
$K^\vee_* \TMF$ is concentrated in even degrees.  Moreover,
by equation~(\ref{eq:structure-KO-theory}) we have that the map $V
\cong KO^\vee_0 \TMF \to K^\vee_0 \TMF$ is an isomorphism.

We next construct maps between these $\psth$-algebras as required in diagram~(\ref{diag:psthdiagram}).

\begin{description}
\item{Construction of the map $b$.}
The map $b$ is determined by equations
(\ref{eq:structure-K-theory}), (\ref{eq:structure-K-KO}), and
pull-back along the canonical projection $\mb Z_2^\times\to\mb
Z_2^\times/\{ \pm 1\}$, and is clearly a map of $\psth$-algebras.

\item{Construction of the map $d$.}
By construction there is a trivialization of the elliptic curve
$E$ over $K^\vee_0\TMF_1(3)$.  As $V$ carries the universal
example of a trivialized elliptic curve over a $2$-adically
  complete ring, this determines a map $d\co V=K^\vee_0\TMF\to
  K^\vee_0\TMF_1(3)$.  A $\Gamma_1(3)$ structure on an elliptic curve
  $E$ determines a unique compatible structure on the quotient by the
  canonical subgroup, as $2$ and $3$ are relatively prime.  This
  implies that the induced map of classifying rings is a map of
  $\psth$-algebras.

\item{Construction of the map $a$.}
The map $a$ is constructed in the same manner as $d$: the ring
$K^\vee_0 KO$ carries the Tate curve $y^2 + xy = x^3$ as universal among
isomorphism classes of nodal elliptic curve equipped with a choice
of trivialization, and the map $a\co V \to K^\vee_0 KO$ classifies it.

\item{Construction of the map $c$.}  The map
\[
\pi_0\TMF_1(3)\cong\comp{\mb Z_2[b]}_2\to K^\vee_0\cong\mb
  Z_2,
\]
determined by sending $b\mapsto 0$ specializes the elliptic curve
$E$ of equation~(\ref{eq:ordinary-curve}) over $\pi_0\TMF_1(3)$ to
the Tate curve $T\co y^2+xy=x^3$.  We fix an isomorphism of formal
groups $\widehat{T}\cong\widehat{\mb G}_m$, so that the pullback of
the elliptic curve $E$ under the composite map $\pi_0 \TMF_1(3)\to
K^\vee_0K$ has a trivialization.  By the universal property of
$K^\vee_0 \TMF_1(3)$, this trivialization determines a map $c\co
K^\vee_0\TMF_1(3)\to K^\vee_0K$, and we need to show that it
commutes with $\psi^\alpha$ ($\alpha\in\mb Z_2^\times$) and
$\psi^2$.

Let $(E, f\co \widehat{E}\stackrel{\cong}{\to}\widehat{\mb G}_m)$ be the
universal trivialization of $E$ over $K^\vee_0(\TMF_1(3))$. Then
\[
(\psi^\alpha)_*(E,f)=(E,[\alpha]\circ f)
\]
for any $\alpha\in\mb Z_2^\times=\mathit{Aut}(\widehat{\mb G}_m)$. Hence
\[
(c\circ\psi^\alpha)_*(E,f)= 
(T,c_*([\alpha])\circ c_*(f)) = 
(T,[\alpha]\circ c_*(f)).
\]
On the other hand,
\[
(\psi^\alpha\circ c)_*(E,f)=\psi_*^\alpha(T,c_*(f))=(T,[\alpha]\circ
c_*(f)),
\]
so $\psi^\alpha\circ c=c\circ\psi^\alpha$, making $c$ compatible
with the $\mb Z_2^\times$-action. 

Recall \cite[page 35]{gouvea} the canonical subgroup $C\subseteq E$ is
  defined so that there is the following diagram
of formal groups over $K_0^\vee (\TMF_1(3))$ with exact columns:
\begin{equation}\label{eq:psipcheck}
\xymatrix{ C \ar[rr]^{\cong} \ar@{^(->}[d] & & \mu_2 \ar@{^(->}[d]\\
\widehat{E} \ar[d]\ar[rr]^f_\cong & & \widehat{\mb G}_m \ar[d]_{[2]} \\
\widehat{E/C} \ar[rr]^{\bar{f}}_\cong & & \widehat{\mb G}_m}
\end{equation}

From this, we find that $c_*(\bar{f})=\overline{c_*(f)}$.
By construction of $\psi^2$, we have
\[ (\psi^2)_*(E,f)=(E/C,\bar{f}),\]
By the functoriality of the canonical subgroup and
(\ref{eq:psipcheck}), we therefore find that
\[ (c\circ\psi^2)_*(E,f)=(c_*(E/C),c_*(\bar{f}))=(T/C,\overline{c_*(f)}),\]
On the other hand,
\[ (\psi^2\circ c)_*(E,f)=\psi_*^2(T,c_*(f))=(T/C,\overline{c_*(f)}).\]
Hence $\psi^2\circ c=c\circ\psi^2$, and $c$ is indeed a map of
$\psth$-algebras.
\end{description}

To see that diagram~(\ref{diag:psthdiagram}) commutes, it
suffices to remark that both composites $b\circ a$ and $c\circ d$
classify the same trivialized generalized elliptic curve over
$K^\vee_0K$, and this is true by construction.
\end{proof}

We now start to realize diagram~(\ref{diag:psthdiagram}) as the
$K^\vee$-homology of a commutative diagram of $K(1)$-local ${\cal
  E}_\infty$-ring spectra.  The authors have not been able to locate a
complete proof in the literature for the following result, though it
is known to the experts and a proof sketch can be found in
\cite[Remark 2.2]{davis-mahowald-connective-versions}.

\begin{prop}\label{prop:TMFtoKO} 
There is an ${\cal E}_\infty$-map $c_{K(1)}\co \TMF\to KO$ such that
$K^\vee_0(c_{K(1)})=a$ as in diagram~(\ref{diag:psthdiagram}).
\end{prop}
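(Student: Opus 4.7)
The plan is to apply Goerss--Hopkins obstruction theory for $K(1)$-local ${\cal E}_\infty$-ring spectra at the prime $2$ to construct the desired lift $c_{K(1)}$ of the $\psth$-algebra map $a$. Since $a$ has already been built in diagram~(\ref{diag:psthdiagram}), the content of the proposition is that this $\psth$-algebra map is the shadow of an ${\cal E}_\infty$-map of spectra. The relevant theory produces a spectral sequence computing the homotopy groups of $\Map^{K(1)}_{{\cal E}_\infty}(\TMF,KO)$ whose $E_2$-page is the André--Quillen cohomology of $\psth$-algebras, $E_2^{s,t} = D^s_{\psth/K_*}(V, K^\vee_*KO[t])$. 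The $(0,0)$-entry is $\Hom_{\psth}(V, K^\vee_0 KO)$ and contains $a$; the obstructions to lifting $a$ to an ${\cal E}_\infty$-map lie in the groups $D^{s+1}_{\psth/K_*}(V, K^\vee_* KO[s])$ for $s \geq 1$, and my goal would be to show these all vanish.

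To establish this vanishing I would rely on two structural inputs. On the target side, $K^\vee_* KO \cong \Hom_c(\mb Z_2^\times/\{\pm 1\}, K_*)$ is pro-free over $K_*$ with trivial $\theta$-action, being a continuous induced module. On the source side, Laures's splitting in equation~(\ref{eq:khomsplitoftmf}) together with $V \cong KO^\vee_0\TMF$ exhibits $V$ as pro-free when viewed as a $K^\vee_0 KO$-comodule algebra, reflecting the $K(1)$-local wedge decomposition of $\TMF$ into copies of $KO$. I would then exploit the pro-\'etale $\mb Z_2^\times/\{\pm 1\}$-Galois extension $K_* \to K^\vee_* KO$ to perform Galois descent, reducing the obstruction computation to an analogous one in which $KO$ is replaced by the Lubin--Tate spectrum $K$. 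The resulting obstruction groups are continuous cochain cohomology groups of $\mb Z_2^\times/\{\pm 1\}$ with values in modules of $\psth$-derivations of $V$, and they should vanish in positive degrees because $V$ is formally smooth as a $\theta$-algebra over $\mb Z_2$ on the ordinary locus, a fact ultimately traceable to Katz's description of $V$ and to the formally \'etale coordinate chart $\Spf(\mb Z_2\pow{a})$ of Section~\ref{sec:elliptic-curve}.

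The main obstacle is handling the $\theta$-operation on $V$ correctly: $\psi^2$ is determined by the quotient by the canonical subgroup of the universal trivialized elliptic curve, and the corresponding lift of Frobenius must be shown to be compatible with the derivation modules appearing in the obstruction groups. Once obstruction vanishing is established, any ${\cal E}_\infty$-lift of $a$ provides the required $c_{K(1)}$, and it automatically realizes $a$ on $K^\vee_0$ via the edge map of the spectral sequence. The plan essentially follows the sketch in \cite[Remark 2.2]{davis-mahowald-connective-versions}, and I expect the conceptual content to be routine; the bulk of the work is a careful exposition of the $K(1)$-local $\psth$-algebra obstruction theory and its vanishing, which is precisely the gap in the literature that the authors note.
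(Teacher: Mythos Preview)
Your approach diverges from the paper's, and there is a genuine obstacle that the paper explicitly flags. The paper does \emph{not} construct $c_{K(1)}$ by obstruction theory; instead it uses the finite cell presentations $\TMF \simeq T_\zeta \cup_y e^1$ and $KO \simeq T_\zeta \cup_f e^1$ of $K(1)$-local ${\cal E}_\infty$-algebras due to Hopkins and Laures, together with a factorization of the attaching map $y$ through $f$. The map $c_{K(1)}$ then comes for free from the universal property of a cell attachment, and the identification $K^\vee_0(c_{K(1)}) = a$ is checked by a $q$-expansion computation showing that $j^{-1} \mapsto 0$ in $\pi_0$.

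Immediately after this proof, the paper states plainly that the map $c_{K(1)}$ \emph{cannot} be constructed by the obstruction theory of Proposition~\ref{prop:mappings}, because $K^\vee_0(KO) \cong \Hom_c(\mb Z_2^\times/\{\pm 1\}, K_0)$ is not an induced $\mb Z_2^\times$-module. This is exactly where your sketch breaks: you call $K^\vee_* KO$ ``a continuous induced module'', but it is coinduced from the nontrivial subgroup $\{\pm 1\}\subset \mb Z_2^\times$, not from the trivial one. By Shapiro's lemma, $H^s_c(\mb Z_2^\times, K^\vee_0 KO)\cong H^s(\{\pm 1\},\mb Z_2)$, which is $\mb Z/2$ in every positive even degree; condition~(iv) of Proposition~\ref{prop:mappings} therefore fails, and the asterisk in the table of Proposition~\ref{prop:mappingspaces} records precisely this failure. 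Your proposed Galois descent along ``$K_*\to K^\vee_* KO$ for $\mb Z_2^\times/\{\pm 1\}$'' does not resolve the issue, since the $\psth$-structure encodes the full $\mb Z_2^\times$-action and the residual $\{\pm 1\}$-cohomology is exactly what obstructs the reduction to $K$. It is conceivable that a more delicate obstruction-theoretic argument (perhaps $KO$-based rather than $K$-based) could be made to work, but your sketch does not supply one; the paper's cell-structure argument sidesteps the difficulty entirely.
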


\begin{proof}
We remind the reader of the presentation of $KO$ and $TMF$ as finite
cell $L_{K(1)}\mb S$-algebras.  We will write $\mb PX$ for the free
$K(1)$-local $E_\infty$-ring spectrum on a $K(1)$-local spectrum $X$.

There is a generator $\zeta\in\pi_{-1}(L_{K(1)}S^0)$, and we define
$T_\zeta$ to be $\mb S \cup_\zeta e^0$: the pushout of the diagram
\[
\mb S \overfrom^{0} \mb PS^{-1} \overto^{\zeta} \mb S
\]
in the category of $K(1)$-local ${\cal E}_\infty$-ring spectra.
We refer to this as the ${\cal E}_\infty$-cone over $\zeta$.

There are elements $y$ and $f$ in $\pi_0 T_\zeta$. We refer
the reader to the discussion surrounding \cite[Proposition 5]{laures}
for the definitions of these elements, and to \cite[end of
appendix]{laures} for the existence of a factorization of the
attaching maps
\[
\xymatrix{
y\co \mb PS^0 \ar[rr]^{\theta(x)-h(x)} &&
 \mb PS^0 \ar[r]^{f} & T_\zeta.
}
\]
The spectrum $KO$ is $T_\zeta\cup_f e^1$, the ${\cal
  E}_\infty$-cone on $f$ \cite[Proposition 13]{hopkins}, and
$\TMF\simeq T_\zeta\cup_y e^1$ \cite[Convention on page 390]{laures}
as ${\cal E}_\infty$-ring spectra.  Therefore, there is an ${\cal
  E}_\infty$-map $c_{K(1)}\co \TMF\to KO$ factoring the given
attaching maps.

It remains to see that $K^\vee_0(c_{K(1)})=a$ and to this end, we
first consider the effect of $c_{K(1)}$ in homotopy. Remembering that
everything is implicitly $2$-completed, we know that
\begin{equation}\label{eq:homotopyofk1localtmf} \pi_0\TMF =
    \mathbb{Z}_2[f]= \mb Z_2[j^{-1}]
\end{equation}
by \cite[Proposition 6 and Lemma 9]{laures}.

By construction $f$ maps to zero under $c_{K(1)}$, and this
implies that $j^{-1}$ also maps to zero by the following
computation.

The element $j^{-1}$ is a $2$-adically convergent power series in $f$:
\begin{equation}\label{eq:expansion}
 j^{-1}=\sum\limits_{n=0}^\infty a_n f^n
\end{equation}
Clearly $c_{K(1)}$ sends $j^{-1}$ to $a_0$.

We now pass to $q$-expansions. It is classical that
$j^{-1}(q)=q+O(q^2)$. Since $f$ is of the form $f=\psi(b)-b$ for a
suitable $2$-adic modular function $b$ and $\psi$ the Frobenius
operator \cite[Equation (33)]{laures}, we learn that the $q$-expansion
\[
f(q)=\psi(b)(q)-b(q)=b(q^2)-b(q)
\]
has constant term $0$.  Hence, taking $q$-expansions of
equation~(\ref{eq:expansion}) and setting  $q=0$ yields $0=a_0$, as
desired.

Finally, knowing that $c_{K(1)}$ sends $j^{-1}$ to $0$ implies that
$K_0^\vee (c_{K(1)})=a$, because $K_0^\vee (c_{K(1)})$ is the map induced 
on the Igusa towers (\cite[Definition 5.6]{lawsonnaumann}) by the map
$\pi_0(c_{K(1)})$.
\end{proof}

All other maps of $K(1)$-local ${\mathcal E}_\infty$-ring spectra we
require will be constructed by obstruction theory. (The reason
the map $c_{K(1)}$ cannot be thus constructed is that $K^\vee_0(KO)$
is not an induced $\mb Z_2^\times$-module
(Equation~\ref{eq:khomofko}), which is a
manifestation of the fact that $KO$ is not complex orientable.)

We recall, for a graded $\psth$-algebra $B_*$ over $(\comp{K}_p)_*$,
$\Omega^t B_*$ is the kernel of the map of augmented $\psth$-algebras
\[
B_* \otimes_{(\comp{K}_p)_*}(\comp{K}_p)^* S^t \to B_*.
\]

\begin{prop}\label{prop:mappings}
Let $p$ be a prime and suppose $X$ and $Y$ are $K(1)$-local
${\mathcal E}_\infty$-ring spectra with the following properties.
\begin{enumerate}[i)]
\item $K_0^\vee (X)$ and $K^\vee _0(Y)$ are $p$-adically complete and
  $K^\vee _1(X)=K^\vee _1(Y)=0$.
\item The inclusion $\left(K^\vee _0(X)\right)^{\mb
    Z_p^\times}\subseteq K^\vee _0(X)$ is the $p$-adic completion of
  an ind-\'etale extension.
\item The ring $\left( K^\vee _0(X)\right)^{\mb Z_p^\times}$ is the
  $p$-adic completion of a smooth $\mb Z_p$-algebra.
\item For all $s>0$ we have $H^s_c(\mb Z_p^\times, K_*^\vee(Y))=0.$
\end{enumerate}
Then there is an isomorphism
\[
f \mapsto K^\vee_0(f)\co \pi_0 {\cal E}_\infty(X,Y)\stackrel{\cong}{\to}
Hom_{\psth}(K_0^\vee X, K_0^\vee Y),
\]
given by the canonical map evaluating on $K_0^\vee$, from the set
of connected components of the derived ${\mathcal E}_\infty$-mapping
space to the set of $\psth$-algebra maps.
\end{prop}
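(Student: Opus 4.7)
The plan is to apply the Goerss-Hopkins obstruction theory for $K(1)$-local ${\cal E}_\infty$-ring spectra referenced at the start of this subsection. That machinery furnishes a conditionally convergent spectral sequence
\[
E_2^{s,t} = \mathrm{Der}^s_{\psth}\bigl(K_*^\vee X,\,\Omega^t K_*^\vee Y\bigr) \Longrightarrow \pi_{t-s}\,{\cal E}_\infty(X,Y),
\]
with $E_2^{0,0} = \Hom_{\psth}(K_0^\vee X, K_0^\vee Y)$ and higher rows given by derived functors of $\Hom$ in the category of $\psth$-algebras over $(\comp{K}_p)_*$. To obtain the asserted bijection it suffices to show $E_2^{s,s}=0$ for all $s\geq 1$: the $s=0$ term is the target of the claimed isomorphism, higher $s$ would contribute either obstructions or indeterminacy to $\pi_0$, and condition (i) ensures that only the diagonal $t=s$ is relevant since both $K_*^\vee X$ and $K_*^\vee Y$ are concentrated in even degrees.

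To analyze the higher derived functors I would regard a $\psth$-algebra as a $\theta$-algebra carrying a continuous $\mb Z_p^\times$-action and decompose the derivations via a composite-functor spectral sequence
\[
H^p_c\bigl(\mb Z_p^\times;\,\mathrm{Der}^q_{\theta}(K_0^\vee X,\,\Omega^n K_0^\vee Y)\bigr) \Longrightarrow \mathrm{Der}^{p+q}_{\psth}\bigl(K_*^\vee X,\,\Omega^n K_*^\vee Y\bigr).
\]
Hypotheses (ii) and (iii) handle the $q$-direction. Setting $R := (K_0^\vee X)^{\mb Z_p^\times}$, condition (iii) says $R$ is the $p$-adic completion of a smooth $\mb Z_p$-algebra, so it admits a $\theta$-structure that is essentially unique and has $\theta$-cotangent complex projective and concentrated in degree zero. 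Condition (ii) then says that $R \subseteq K_0^\vee X$ is the $p$-adic completion of an ind-\'etale extension, so $\theta$-operations on $K_0^\vee X$ are uniquely determined by those on $R$ and the relative $\theta$-cotangent complex vanishes. Hence $\mathrm{Der}^q_\theta(K_0^\vee X,-)=0$ for $q\geq 1$, and the spectral sequence collapses to the row $q=0$.

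The remaining row reads $H^p_c\bigl(\mb Z_p^\times;\,\Hom_\theta(K_0^\vee X,\,\Omega^n K_0^\vee Y)\bigr)$. The ind-\'etaleness provided by (ii) identifies this $\Hom$-group with a continuous functional module over the Galois group $\mb Z_p^\times$ whose coefficients are built from $K_*^\vee Y$; hypothesis (iv) then forces the continuous cohomology to vanish for $p\geq 1$. Combining the two vanishing statements gives $E_2^{s,s}=0$ for $s\geq 1$, so the edge map $\pi_0{\cal E}_\infty(X,Y) \to \Hom_{\psth}(K_0^\vee X, K_0^\vee Y)$ is a bijection.

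The principal obstacle is the organizational step of identifying the Goerss-Hopkins $E_2$-term with the iterated (Galois-plus-algebraic) cohomology displayed above; this is achieved by choosing a simplicial resolution of $K_0^\vee X$ by free $\psth$-algebras built over the smooth fixed ring $R$, after which the vanishing of each $E_2^{s,s}$ ($s\geq 1$) follows mechanically from hypotheses (ii)-(iv).
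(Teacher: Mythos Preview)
Your strategy---apply Goerss--Hopkins obstruction theory and kill the obstruction groups by decomposing $\psth$-cohomology into a continuous Galois piece and a $\theta$-algebra piece---is exactly the paper's, except that the paper outsources the entire computation to two citations from \cite{lawsonnaumann}: Lemma~5.14 there establishes, from hypotheses (i)--(iv), the vanishing of $H^s_{\psth}(A_*/(\comp{K}_p)_*,\Omega^t B_*)$ in the relevant range, and Theorem~5.13 supplies the obstruction-theoretic conclusion.

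One point of bookkeeping deserves correction. The obstructions to \emph{realizing} a given $\psth$-algebra map (i.e., to surjectivity of $\pi_0{\cal E}_\infty(X,Y)\to\Hom_{\psth}$) live off the diagonal, in bidegrees $(s,s-1)$ for $s\geq 2$; your stated sufficiency ``$E_2^{s,s}=0$ for $s\geq 1$'' addresses only injectivity, and evenness from condition~(i) kills only the columns with $t$ odd, not everything off the diagonal. The cited lemma asserts vanishing precisely for $s\geq 2$ \emph{or} $t$ odd: the first range disposes of all off-diagonal obstructions together with the diagonal for $s\geq 2$, and the second picks off the remaining diagonal term $(1,1)$. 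In particular, your claim that smoothness and ind-\'etaleness force $\mathrm{Der}^q_\theta(K_0^\vee X,-)=0$ for all $q\geq 1$ is stronger than what the paper proves or needs, and the case $q=1$ does not obviously follow from those hypotheses; nor does hypothesis~(iv), which is stated for $K_*^\vee(Y)$ itself, transparently give the vanishing of $H^p_c$ with coefficients in the derivation module you require.
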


\begin{proof} The first assumption implies that $A_*:=K^\vee_*(X)$ and
  $B_*:=K_*^\vee (Y)$ are graded, $p$-adic, even-periodic
  $\psth$-algebras.  The remaining conditions are exactly those
  of \cite[Lemma 5.14]{lawsonnaumann}, application of which implies
  that for all $s\ge 2$ or $t\in\mb Z$ odd we have vanishing of
  the $\psth$-algebra cohomology groups
\[
H^s_{\psth}(A_*/(\comp{K}_p)_*,\Omega^t B_*).
\]
The claim now follows from Goerss-Hopkins obstruction theory as
  in \cite[Theorem 5.13, 3]{lawsonnaumann}.
\end{proof}

We will make use of the following particular 
instances of this result.

\begin{prop}\label{prop:mappingspaces}
For each dotted arrow between $K(1)$-local ${\cal
  E}_\infty$-ring spectra $X$ and $Y$ in the diagram
\[
\xymatrix{
&
\TMF \ar@{.>}[d] \ar@{.>}[dr] \ar@{.>}[dl] &
KO \ar@{.>}[d]\\
L_{K(1)} L_{K(2)} \tmf_1(3) &
\TMF_1(3) \ar@{.>}[r] &
K,
}
\]
there is an isomorphism
\[
f \mapsto K^\vee_0(f)\co \pi_0 {\cal E}_\infty(X,Y) \stackrel{\cong}{\to}
Hom_{\psth}(K_0^\vee X, K_0^\vee Y)
\]
given by evaluation on $K_0^\vee$.
\end{prop}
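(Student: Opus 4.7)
The plan is to verify the four hypotheses of Proposition~\ref{prop:mappings} at $p = 2$ for each of the four dotted arrows and then invoke that proposition. All the relevant $K^\vee$-homology computations are already available from the preceding proposition, so the work is largely one of recognition: $K^\vee_0$ of each source should be a $\mb Z_2^\times$-Galois extension of a $2$-adically smooth subring, and $K^\vee_*$ of each target should have vanishing continuous $\mb Z_2^\times$-cohomology in positive degrees.

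Condition (i) (even concentration and $2$-adic completeness of $K_0^\vee$, vanishing of $K_1^\vee$) is immediate from the descriptions in \eqref{eq:khomoftmf}, \eqref{eq:khomofko}, \eqref{eq:k_homology_of_k}, the splitting \eqref{eq:khomsplitoftmf}, and the construction of $\TMF_1(3)$ recalled in the preceding proof; the case $L_{K(1)}L_{K(2)}\tmf_1(3)$ follows from complex-orientability of $\tmf_1(3)$ together with the height-$1$ constraint imposed by the $K(1)$-localization. For conditions (ii) and (iii) on the sources: for $X=\TMF$, equation~\eqref{eq:homotopyofk1localtmf} and the splitting \eqref{eq:khomsplitoftmf} identify $(K_0^\vee\TMF)^{\mb Z_2^\times} \cong KO_0^\vee\TMF \cong \mb Z_2\pow{f}$, which is $2$-adically smooth over $\mb Z_2$, and the extension to $V$ is the ind-\'etale Igusa tower classifying trivializations of the universal elliptic curve. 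For $X=KO$, the invariants in \eqref{eq:khomofko} are $K_0\cong\mb Z_2$ (trivially smooth), and the extension is a split ind-\'etale $\mb Z_2^\times/\{\pm 1\}$-Galois extension by the remark following \eqref{eq:structure-KO-theory}.

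Condition (iv) requires $H^s_c(\mb Z_2^\times, K_*^\vee Y) = 0$ for $s>0$ and each of the three targets. For $Y=K$, equation~\eqref{eq:structure-K-theory} presents $K_*^\vee K$ as a $\mb Z_2^\times$-induced module, so its continuous cohomology vanishes trivially. For $Y=\TMF_1(3)$, the proof of the previous proposition exhibits $\pi_0\TMF_1(3)\to K_0^\vee\TMF_1(3)$ as a $\mb Z_2^\times$-Galois extension, again making $K_0^\vee\TMF_1(3)$ continuously induced and producing the vanishing. The principal obstacle is $Y = L_{K(1)}L_{K(2)}\tmf_1(3)$: one must identify $K_0^\vee$ of this spectrum as an ind-\'etale $\mb Z_2^\times$-Galois extension of its invariants. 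Because $\tmf_1(3)$ is complex-orientable and its $K(1)$-local formal group has constant height $1$, the trivializing cover of this formal group over $K_0^\vee L_{K(1)}L_{K(2)}\tmf_1(3)$ is again a $\mb Z_2^\times$-Galois extension, yielding the required acyclicity. With all four hypotheses verified, Proposition~\ref{prop:mappings} produces the claimed isomorphism in each case.
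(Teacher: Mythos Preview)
Your approach is the same as the paper's: verify the hypotheses of Proposition~\ref{prop:mappings} for each pair $(X,Y)$ and invoke that result, with the paper simply tabulating the relevant citations rather than spelling out each verification. Two small oversights: there are five dotted arrows, not four, and $\TMF_1(3)$ also occurs as a \emph{source} (for the arrow $\TMF_1(3)\to K$), so you should explicitly record that conditions (ii) and (iii) hold for $X=\TMF_1(3)$ --- you have the needed facts already, since you note $\pi_0\TMF_1(3)\to K_0^\vee\TMF_1(3)$ is a $\mb Z_2^\times$-Galois extension and $\pi_0\TMF_1(3)\cong\comp{\mb Z_2[b]}_2$ is smooth.
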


\begin{proof}
In order to deduce this from Proposition \ref{prop:mappings}, we
need to know certain properties of the $K^\vee$-homology of the
spectra involved, the local references for which we summarize in
the following table.

\begin{tabular*}{1.0\linewidth}{lccccc}
&$KO$ & $K$ & $\TMF_1(3)$ & $L_{K(1)} L_{K(2)}\TMF_1(3)$ & $\TMF$ \\
\text{$p$-adic, even}&(\ref{eq:structure-K-KO}) & (\ref{eq:structure-K-theory}) & \cite[5.4]{lawsonnaumann} & \cite[5.4]{lawsonnaumann} & (\ref{eq:khomoftmf})+(\ref{eq:structure-K-KO})\\
\text{ind-\'etale $K^\vee_0$}&(\ref{eq:structure-K-KO}) & (\ref{eq:structure-K-theory}) & \cite[5.8]{lawsonnaumann} & \cite[5.8]{lawsonnaumann} & (\ref{eq:khomoftmf})\\
\text{smooth subring}&(\ref{eq:structure-K-KO}) & (\ref{eq:structure-K-theory}) & \cite[6.1]{lawsonnaumann} & \cite[3.5]{lawsonnaumann} & (\ref{eq:homotopyofk1localtmf}) \\
\text{no cohomology}&* & (\ref{eq:structure-K-theory}) & \cite[5.8,3]{lawsonnaumann} & \cite[5.8,3]{lawsonnaumann} &*
\end{tabular*}

Here, the rows correspond to the itemized conditions in Proposition
\ref{prop:mappings} and the columns to the spectra under
consideration.  Note an entry means that the given spectrum satisfies
any assertions about either the domain spectrum $X$ or the target $Y$.
The statements labeled with an asterisk are actually false:
$K^\vee_0(KO)$ and $K^\vee _0(\TMF)$ are not cohomologically trivial
$\mb Z_2^\times$-modules.  These statements are not needed,
because this proposition does not make any assertions about maps {\em
  into} $KO$ or $\TMF$.
\end{proof}

\begin{cor}\label{cor:k1-diagram}
There is a diagram of ${\cal E}_\infty$-maps
\begin{equation}\label{eq:K1-diag}    
\xymatrix{
\TMF=L_{K(1)}\tmf  \ar[r]^-{c_{K(1)}} \ar[d]_{o_{K(1)}} & KO=L_{K(1)}ko\ar[d]^{\iota_{K(1)}} \\
\TMF_1(3)=L_{K(1)}\tmf_1(3) \ar[r]^-{\tilde{c}_{K(1)}} & K=L_{K(1)}ku,
}
\end{equation}
with diagram (\ref{diag:psthdiagram}) being realized by the
$K_0^\vee$-homology of diagram (\ref{eq:K1-diag}) and diagram
(\ref{eq:K1-diag}) commuting up to homotopy in the category of
  ${\cal E}_\infty$-ring spectra.
\end{cor}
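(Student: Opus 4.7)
The plan is to apply Proposition \ref{prop:mappingspaces} repeatedly to realize each of the $\psth$-algebra maps $b$, $c$, $d$ from diagram~(\ref{diag:psthdiagram}) by ${\cal E}_\infty$-ring maps, using the map $c_{K(1)}$ already furnished by Proposition~\ref{prop:TMFtoKO} for the realization of $a$. The commutativity statement will then be obtained from a second application of Proposition~\ref{prop:mappingspaces}, using it as a rigidity/uniqueness result on components of mapping spaces.

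First I would note that each of the pairs $(\TMF, \TMF_1(3))$, $(KO,K)$, and $(\TMF_1(3),K)$ is one of the dotted arrows appearing in Proposition~\ref{prop:mappingspaces}. Therefore the $\psth$-algebra maps $d\co K_0^\vee\TMF\to K_0^\vee\TMF_1(3)$, $b\co K_0^\vee KO\to K_0^\vee K$, and $c\co K_0^\vee\TMF_1(3)\to K_0^\vee K$ lift uniquely (up to homotopy) to ${\cal E}_\infty$-ring maps $o_{K(1)}$, $\iota_{K(1)}$, and $\tilde{c}_{K(1)}$ respectively. Together with $c_{K(1)}$ from Proposition~\ref{prop:TMFtoKO}, this produces all four arrows of diagram~(\ref{eq:K1-diag}), with the prescribed effect on $K_0^\vee$-homology.

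It then remains to verify that the resulting square commutes up to homotopy in the ${\cal E}_\infty$ category. Both composites $\iota_{K(1)}\circ c_{K(1)}$ and $\tilde{c}_{K(1)}\circ o_{K(1)}$ are ${\cal E}_\infty$-ring maps $\TMF\to K$. Since $(\TMF,K)$ is also among the dotted pairs in Proposition~\ref{prop:mappingspaces}, evaluation on $K_0^\vee$ is a bijection on components of the derived ${\cal E}_\infty$-mapping space. By functoriality of $K_0^\vee$, the two composites induce $b\circ a$ and $c\circ d$ on $K_0^\vee$, respectively; these coincide by the commutativity of the $\psth$-algebra diagram~(\ref{diag:psthdiagram}) already established in the preceding proposition. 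Consequently the two composites lie in the same component of the ${\cal E}_\infty$-mapping space, which is exactly the desired homotopy commutativity.

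There is no substantive obstacle beyond bookkeeping: the work has already been done in Proposition~\ref{prop:mappingspaces} (verification of the hypotheses of Proposition~\ref{prop:mappings} for each of the spectra involved) and in Proposition~\ref{prop:TMFtoKO} (construction of the one map $c_{K(1)}$ whose target $KO$ is \emph{not} complex orientable, so that the obstruction-theoretic argument does not apply directly). The only minor point worth emphasizing is that commutativity is checked by comparing two maps into $K$, a target for which the rigidity hypothesis of Proposition~\ref{prop:mappingspaces} is satisfied; this is why we do not need to worry about the failure of the hypothesis for maps into $KO$ or $\TMF$.
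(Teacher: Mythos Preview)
Your argument is correct and follows the paper's proof essentially verbatim: lift $b$, $c$, $d$ via Proposition~\ref{prop:mappingspaces}, take $c_{K(1)}$ from Proposition~\ref{prop:TMFtoKO}, and then check commutativity by observing that both composites $\TMF\to K$ agree on $K_0^\vee$ and invoking the bijection from Proposition~\ref{prop:mappingspaces} for the pair $(\TMF,K)$. Your closing remark about why the failure of the hypothesis for maps into $KO$ is irrelevant matches the paper's own aside that one need not know whether $a$ is characterized by its effect in $K^\vee$-homology.
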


\begin{proof}
Applying Proposition \ref{prop:mappingspaces}, we obtain
${\mathcal E}_\infty$-maps $o_{K(1)}$, $\tilde{c}_{K(1)}$, and
$\iota_{K(1)}$ that are characterized up to homotopy by
satisfying $K_0^\vee (o_{K(1)})=d$, $K_0^\vee (\tilde{c}_{K(1)})=c$, and
$K_0^\vee (\iota_{K(1)})=b$.  From Proposition \ref{prop:TMFtoKO} we
already have the ${\mathcal E_\infty}$-map $c_{K(1)}$ satisfying
$K_0(c_{K(1)})=a$.  Note that we do not need to know
whether $a$ is characterized by its effect in $K^\vee$-homology.

Proposition \ref{prop:mappingspaces} then reduces the homotopy
commutativity of diagram (\ref{eq:K1-diag}) to the previously
established commutativity of diagram (\ref{diag:psthdiagram}).
\end{proof}

\subsection{Chromatic gluing of maps}\label{subsec:che-glue-maps}

We briefly remind the reader of chromatic pullbacks in stable
homotopy, referring to the introduction of
\cite{goerss-henn-mahowald-rezk} for more details and references.

Fixing a prime $p$, every $p$-local spectrum $X$ maps canonically
to a tower of Bousfield localizations
\[
X\longrightarrow\left( \cdots L_n X\longrightarrow
L_{n-1}X\longrightarrow\cdots \longrightarrow
L_0X=X\otimes\mathbb{Q}\right),
\]
and the various stages of this tower are determined by canonical
homotopy pullbacks, called chromatic fracture squares
\cite{hovey-strickland-localisation} or \cite[Lecture 23, Proposition 5]{luriechromatic}:
\begin{equation}\label{diag:lntolnplus1}
\xymatrix{ L_n X\ar[r]\ar[d] & L_{K(n)}X\ar[d]\\
L_{n-1}X\ar[r] & L_{n-1}(L_{K(n)}X)}
\end{equation}
Here, $K(n)$ denotes any Morava $K$-theory of height $n$ at the prime
$p$ and the localization functor $L_n$ is naturally equivalent to
$L_{K(0) \vee \cdots \vee K(n)}$.  We will write $L_{K(1)} L_{K(2)} X$
for the iterated localization $L_{K(1)} L_{K(2)} X$, and similarly for
other iterates.

We will use other canonical homotopy pullbacks similar to
(\ref{diag:lntolnplus1}), such as the following:
\begin{equation}\label{diag:k1andk2glueing}
\xymatrix{ L_{K(1)\vee K(2)} Y\ar[r]\ar[d] & L_{K(2)}Y\ar[d]\\
L_{K(1)}Y\ar[r] & L_{K(1)} L_{K(2)}Y}
\end{equation}

\begin{lem}\label{lem:chrom-glue-maps}
  Assume $f_{K(i)}\co L_{K(i)}X\to L_{K(i)}Y$ ($i=1,2$) are ${\cal
    E}_\infty$-maps such that the diagram
\[
\xymatrix{
L_{K(1)}X \ar[r] \ar[d]^{f_{K(1)}} & L_{K(1)} L_{K(2)}X\ar[d]^{L_{K(1)}(f_{K(2)})} \\
L_{K(1)} Y\ar[r] & L_{K(1)} L_{K(2)} Y
}
\]
commutes up to homotopy in the category of ${\cal E}_\infty$-ring spectra.

Then there is an ${\cal E}_\infty$-map $f\co L_{K(1)\vee K(2)} X\to
L_{K(1)\vee K(2)}Y$, not necessarily unique, such that the diagrams
\[
\xymatrix{
L_{K(1)\vee K(2)} X\ar[r]^{f} \ar[d] & L_{K(1)\vee K(2)} Y\ar[d]
& & L_{K(1)\vee K(2)} X\ar[r]^{f} \ar[d] & L_{K(1)\vee
K(2)} Y\ar[d] \\
L_{K(1)}X\ar[r]^{f_{K(1)}} & L_{K(1)} Y & & L_{K(2)}X
\ar[r]^{f_{K(2)}} & L_{K(2)} Y
}
\]
commute up to homotopy.
\end{lem}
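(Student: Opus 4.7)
The plan is to apply the universal property of the chromatic fracture square~(\ref{diag:k1andk2glueing}) for $Y$, viewed as a homotopy pullback in the category of ${\cal E}_\infty$-ring spectra, using the square from the hypothesis to produce the required cone datum out of $L_{K(1)\vee K(2)} X$.

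First I would verify that~(\ref{diag:k1andk2glueing}) really is a homotopy pullback of ${\cal E}_\infty$-ring spectra, not merely of underlying spectra. This follows from two standard facts: the Bousfield localizations $L_{K(n)}$ preserve ${\cal E}_\infty$-structure, so all four maps in~(\ref{diag:k1andk2glueing}) lift canonically to ${\cal E}_\infty$-ring spectra, and the forgetful functor from ${\cal E}_\infty$-rings to spectra preserves homotopy limits, being a right adjoint to the free algebra functor. Consequently, the space of ${\cal E}_\infty$-maps from any $Z$ to $L_{K(1)\vee K(2)} Y$ is the homotopy limit of its mapping spaces to the three other corners of (\ref{diag:k1andk2glueing}) applied to $Y$.

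Next I would assemble the required cone with apex $Z := L_{K(1)\vee K(2)} X$. Let $g_i\co Z\to L_{K(i)} Y$ be the composite of the canonical map $Z \to L_{K(i)} X$ with $f_{K(i)}$, for $i=1,2$. The two induced composites $Z \to L_{K(1)} L_{K(2)} Y$ both factor through $Z \to L_{K(1)} X$: the one passing through $L_{K(1)} Y$ does so directly through $f_{K(1)}$, and the one passing through $L_{K(2)} X$ does so by applying $L_{K(1)}$ to $f_{K(2)}$ together with the identity $Z\to L_{K(2)} X\to L_{K(1)} L_{K(2)} X = Z\to L_{K(1)} X\to L_{K(1)} L_{K(2)} X$ which holds by the universal property of $L_{K(1)} L_{K(2)} X$. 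But the hypothesis provides exactly a homotopy between these two composites out of $L_{K(1)} X$; pulling any such representing homotopy back along $Z \to L_{K(1)} X$ completes the cone datum, and the universal property produces the desired ${\cal E}_\infty$-map $f$. The stated homotopy commutativity after $K(i)$-localization is then built in by construction, since $f$ literally factors the $g_i$ through the projections of the pullback.

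The main obstacle is only the formal verification that~(\ref{diag:k1andk2glueing}) is a homotopy pullback in ${\cal E}_\infty$-ring spectra; this is well known but should be cited with care. The non-uniqueness of $f$ is then transparent: different choices of representing homotopy yield in general non-homotopic lifts, since the homotopy fiber of the restriction map to $L_{K(1)} Y \times L_{K(2)} Y$ is a torsor under a loop space of the mapping space of ${\cal E}_\infty$-ring maps $Z \to L_{K(1)} L_{K(2)} Y$.
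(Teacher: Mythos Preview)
Your proposal is correct and follows the same approach as the paper: the paper's proof is the single sentence ``Apply the derived mapping-space functor ${\cal E}_\infty(L_{K(1)\vee K(2)} X, -)$ to the chromatic fracture square~(\ref{diag:k1andk2glueing}),'' and your argument is precisely the unpacking of that sentence, including the observation that the fracture square is a homotopy pullback in ${\cal E}_\infty$-ring spectra and the explicit assembly of the cone datum.
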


\begin{proof}
Apply the derived mapping-space functor ${\cal E}_\infty(L_{K(1)\vee
K(2)} X, -)$ to the chromatic fracture square
(\ref{diag:k1andk2glueing}).
\end{proof}

\begin{cor}\label{cor:o-k1-k2-local}
There exists an ${\mathcal E}_\infty$-map
\[ o_{K(1)\vee K(2)}\co L_{K(1)\vee K(2)}\tmf\to L_{K(1)\vee K(2)}\tmf_1(3)\]
such that the diagrams of ${\mathcal E}_\infty$-maps
\[
\xymatrix{
L_{K(1)\vee K(2)} \tmf\ar[r]_-{o_{K(1)\vee K(2)}} \ar[d] & L_{K(1)\vee
  K(2)} \tmf_1(3)\ar[d] \\
L_{K(1)}\tmf\ar[r]_-{o_{K(1)}} & L_{K(1)} \tmf_1(3)}
\hspace{1pc}
\xymatrix{
L_{K(1)\vee K(2)} \tmf\ar[r]_-{o_{K(1)\vee K(2)}} \ar[d] & L_{K(1)\vee
K(2)} \tmf_1(3)\ar[d] \\
L_{K(2)}\tmf \ar[r]_-{o_{K(2)}} & L_{K(2)} \tmf_1(3)
}
\]
both commute up to homotopy.
\end{cor}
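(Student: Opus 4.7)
The plan is to invoke Lemma~\ref{lem:chrom-glue-maps} with $X=\tmf$ and $Y=\tmf_1(3)$, taking $f_{K(1)}=o_{K(1)}$ from Corollary~\ref{cor:k1-diagram} and $f_{K(2)}=o_{K(2)}$ from Subsection~\ref{subsec:k2-local-maps}. Once the gluing hypothesis is established, the conclusion of the lemma directly supplies the $\mathcal{E}_\infty$-map $o_{K(1)\vee K(2)}$ together with the homotopy commutativity of the two required squares, so the only substantive task is to verify this hypothesis.

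That hypothesis demands the homotopy commutativity, in $\mathcal{E}_\infty$-ring spectra, of
\[
\xymatrix{
\TMF \ar[r] \ar[d]_{o_{K(1)}} & L_{K(1)}L_{K(2)}\tmf \ar[d]^{L_{K(1)}(o_{K(2)})} \\
\TMF_1(3) \ar[r] & L_{K(1)}L_{K(2)}\tmf_1(3).
}
\]
Both composites are $\mathcal{E}_\infty$-maps from $\TMF$ into $L_{K(1)}L_{K(2)}\tmf_1(3)$. Since the latter appears as an admissible target in Proposition~\ref{prop:mappingspaces}, passage to $K^\vee_0$ is a faithful invariant on homotopy classes of such maps, so it suffices to check that the two resulting $\psth$-algebra homomorphisms on $K^\vee_0$ agree.

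The main obstacle is thus this $K^\vee_0$-level comparison, which I would settle by unwinding the modular interpretation. The plan is to show that both composites classify the same trivialized generalized elliptic curve over $K^\vee_0 L_{K(1)}L_{K(2)}\tmf_1(3)$, namely the one obtained from the universal trivialized elliptic curve over $V\cong K^\vee_0\TMF$ by adjoining the $\Gamma_1(3)$-structure built into the construction of $\tmf_1(3)$. The composite through $\TMF_1(3)$ realizes this directly, via the map $d$ of diagram~(\ref{diag:psthdiagram}). The composite through $L_{K(2)}\tmf$ realizes the same classifying map because $o_{K(2)}$ is the homotopy-fixed-point map induced by the inclusion $S_3\subseteq G_{48}$ of~(\ref{subgroup}) and, under the Serre--Tate description of deformations of the supersingular elliptic curve, this inclusion encodes precisely the forgetful map of moduli $\overline{\cal M}_1(3)_{(2)}\to\overline{\cal M}_{(2)}$. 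Since both chromatic fragments descend from this single map of moduli stacks, the two $\psth$-algebra maps on $K^\vee_0$ coincide, which completes the verification of the hypothesis and thereby the corollary.
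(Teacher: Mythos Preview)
Your proposal is correct and follows essentially the same route as the paper: invoke Lemma~\ref{lem:chrom-glue-maps}, reduce the gluing hypothesis to a $K^\vee_0$-level check via Proposition~\ref{prop:mappingspaces}, and verify that both composites classify the same trivialized elliptic curve coming from the forgetful map of moduli. The only difference is that the paper makes the final step concrete by writing down the two Weierstrass equations $y^2+xy+a^{-3}y=x^3$ and $y^2+axy+y=x^3$ over $\comp{\mb Z\laur{a}}_2\cong\pi_0 L_{K(1)}L_{K(2)}\tmf_1(3)$ (arising from the charts $V$ and $W$ of Section~\ref{sec:elliptic-curve}) and observing they are isomorphic, whereas you phrase this more abstractly via Serre--Tate and the inclusion $S_3\subseteq G_{48}$.
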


\begin{proof}
This follows from Lemma \ref{lem:chrom-glue-maps}, provided we can
establish the commutativity up to homotopy of the following diagram of
${\mathcal E}_\infty$-ring spectra:
\begin{equation}\label{diag:k1}
\xymatrix{ L_{K(1)}\tmf\ar[rr]^-{o_{K(1)}} \ar[d]&  & 
L_{K(1)}\tmf_1(3)\ar[d]\\
L_{K(1)} L_{K(2)}\tmf\ar[rr]^-{L_{K(1)}(o_{K(2)})} & & 
L_{K(1)} L_{K(2)} \tmf_1(3)}
\end{equation}
The initial and terminal objects in this diagram appear in
  Proposition \ref{prop:mappingspaces}, and so it suffices to see
that the induced diagram in $K^\vee$-homology commutes:
\[
\xymatrix{V\ar[r]\ar[d] & K_0^\vee L_{K(1)}\tmf_1(3)\ar[d]\\
K_0^\vee L_{K(1)} L_{K(2)}\tmf\ar[r] & K_0^\vee L_{K(1)} L_{K(2)} \tmf_1(3)}
\]
This holds true because both composites classify isomorphic
trivializations of the elliptic curves $y^2 + xy + a^{-3}y = x^3$
and $y^2 + axy + y = x^3$ over $\comp{\mb Z\laur{a}}_2$
$\cong\pi_0L_{K(1)} L_{K(2)}\tmf_1(3)$ (see
Section~\ref{sec:elliptic-curve}).
\end{proof}


\begin{prop}\label{prop:k1-k2-diag}
There exists a diagram of ${\cal E}_\infty$-ring spectra which
commutes up to homotopy as follows:
\[
\xymatrix{
L_{K(1)\vee K(2)}\tmf \ar[rr]^-{c_{K(1)\vee K(2)}} \ar[d]^{o_{K(1)\vee
    K(2)}} & &
L_{K(1) \vee K(2)}ko\ar[d]^{\iota_{K(1)\vee K(2)}} \\
L_{K(1)\vee K(2)}\tmf_1(3)\ar[rr]^-{\tilde{c}_{K(1)\vee K(2)}} & &
L_{K(1) \vee K(2)}ku
}
\]
\end{prop}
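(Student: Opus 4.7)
The plan is to build the three remaining ${\cal E}_\infty$-maps $c_{K(1)\vee K(2)}$, $\iota_{K(1)\vee K(2)}$, and $\tilde{c}_{K(1)\vee K(2)}$ by invoking Lemma~\ref{lem:chrom-glue-maps}, using the $K(1)$-local maps supplied by Corollary~\ref{cor:k1-diagram}. The map $o_{K(1)\vee K(2)}$ has already been produced in Corollary~\ref{cor:o-k1-k2-local}. The key simplification is that, per Subsection~\ref{subsec:k2-local-maps}, we have $L_{K(2)}ko \simeq L_{K(2)}ku \simeq \ast$, so in all three constructions the $K(2)$-local side is forced to be the trivial map into a zero spectrum.

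Concretely, for each of the three maps, I would feed Lemma~\ref{lem:chrom-glue-maps} the $K(1)$-local map from Corollary~\ref{cor:k1-diagram} together with the essentially unique $K(2)$-local map into $\ast$. The required compatibility square lives over $L_{K(1)}L_{K(2)}ko \simeq L_{K(1)}L_{K(2)}ku \simeq \ast$, so it commutes for trivial reasons. The lemma then yields ${\cal E}_\infty$-maps $c_{K(1)\vee K(2)}$, $\iota_{K(1)\vee K(2)}$, and $\tilde{c}_{K(1)\vee K(2)}$ whose $K(1)$-localizations reproduce those of Corollary~\ref{cor:k1-diagram} up to homotopy.

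It remains to check commutativity of the resulting square. Both composites are ${\cal E}_\infty$-maps $L_{K(1)\vee K(2)}\tmf \to L_{K(1)\vee K(2)}ku$. Applying the chromatic fracture square~(\ref{diag:k1andk2glueing}) to the target, and using $L_{K(2)}ku \simeq L_{K(1)}L_{K(2)}ku \simeq \ast$, we see that the canonical map $L_{K(1)\vee K(2)}ku \to L_{K(1)}ku = K$ is an equivalence, so detecting equality in the target mapping space reduces to detecting equality after $K(1)$-localization. But the $K(1)$-localizations of the two composites are homotopic by Corollary~\ref{cor:k1-diagram}, establishing the required homotopy commutativity. The main obstacle, such as it is, lies in this last step: one must be comfortable with the fact that ${\cal E}_\infty$-mapping spaces convert the chromatic fracture square of the target into a homotopy pullback of mapping spaces, so that a pair of $K(i)$-localizations together with a homotopy of their images in $L_{K(1)}L_{K(2)}$ suffices both to produce a map and to witness homotopies between such maps.
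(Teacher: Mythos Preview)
Your argument is correct and rests on the same observation as the paper's: since $L_{K(2)}ko$ and $L_{K(2)}ku$ are trivial, the $K(1)\vee K(2)$-localizations of $ko$ and $ku$ coincide with their $K(1)$-localizations, so everything is governed by the $K(1)$-local diagram of Corollary~\ref{cor:k1-diagram}. The paper just organizes this more directly: rather than invoking Lemma~\ref{lem:chrom-glue-maps} three times with trivial $K(2)$-data and then separately checking commutativity, it simply pastes the square of Corollary~\ref{cor:o-k1-k2-local} next to that of Corollary~\ref{cor:k1-diagram} and \emph{defines} $c_{K(1)\vee K(2)}$, $\tilde c_{K(1)\vee K(2)}$, and $\iota_{K(1)\vee K(2)}$ as the resulting composites (using $L_{K(1)\vee K(2)}ko\simeq L_{K(1)}ko$ and likewise for $ku$). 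Commutativity of the big square is then immediate from the pasting, with no further argument needed. Your route through the gluing lemma is a valid detour that reproves this degenerate case of the fracture square, but it adds no real content beyond what the direct definition already gives.
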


\begin{proof}
We have the following diagram of ${\cal E}_\infty$-ring spectra
which commutes up to homotopy:
\begin{equation}\label{eq:K1-K2-diag}
\xymatrix{
L_{K(1)\vee K(2)}\tmf \ar[r] \ar[d]^{o_{K(1)\vee
    K(2)}} &
L_{K(1)} \tmf \ar[r]^-{c_{K(1)}} \ar[d]^{o_{K(1)}}& 
L_{K(1)}ko\ar[d]^{\iota_{K(1)\vee K(2)}} \\
L_{K(1)\vee K(2)}\tmf_1(3)\ar[r] &
L_{K(1)}\tmf_1(3) \ar[r]^-{\tilde c_{K(1)}} &
L_{K(1)}ku}
\end{equation}
Here, the left square is from
  Corollary~\ref{cor:o-k1-k2-local}, and the right one is from
  Corollary~\ref{cor:k1-diagram}.  Since $L_{K(1) \vee K(2)} ko
\simeq L_{K(1)} ko$ and similarly for $ku$, we can define the upper
and lower horizontal composites to be $c_{K(1) \vee K(2)}$ and $\tilde
c_{K(1) \vee K(2)}$ respectively.
\end{proof}

\subsection{The rational maps}\label{subsec:K0-local-maps}

We first note the following about rational ${\cal
    E}_\infty$-ring spectra.
\begin{lem}\label{lem:rationalmaps}
Suppose $X$ and $Y$ are ${\cal E}_\infty$-ring spectra such that
$\pi_* X \otimes \mb Q$ is a free graded-commutative $\mb Q$-algebra
on generators in even nonnegative degrees, and $\pi_* Y$ is rational
with homotopy in nonnegative odd degrees.  Then the natural map
\[
\pi_0 {\cal E}_\infty(X,Y) \to \Hom_{\text{graded rings}}(X_*,Y_*)
\]
is bijective, and all path components of the derived mapping space
${\cal E}_\infty(X,Y)$ are simply connected.
\end{lem}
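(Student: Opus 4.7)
My plan is to exploit the rationality of $Y$ together with the intrinsic formality of $X$ after rationalization.  Since $Y$ is rational, the rationalization map $X\to X_{\mb Q}:=X\wedge H\mb Q$ induces an equivalence of derived mapping spaces ${\cal E}_\infty(X,Y)\simeq{\cal E}_\infty(X_{\mb Q},Y)$, so it suffices to analyze the latter.

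The key step is to identify $X_{\mb Q}$ with a free ${\cal E}_\infty$-$H\mb Q$-algebra.  Writing $A:=\pi_*X\otimes\mb Q\cong\mb Q[x_i]$ as a polynomial algebra on generators $x_i$ in even nonnegative degrees $n_i$, this amounts to the intrinsic formality of polynomial $\mb Q$-algebras on even generators.  Via Mandell's theorem identifying rational ${\cal E}_\infty$-ring spectra with commutative differential graded $\mb Q$-algebras, the formality claim reduces to an algebraic statement: the cotangent module $\Omega^1_{A/\mb Q}$ is $A$-free and concentrated in degree zero, so $\Ext^{>0}_A(\Omega^1_{A/\mb Q},-)=0$, and hence all Andr\'e--Quillen (Harrison) obstructions to formality and to uniqueness of lifts vanish.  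This yields an equivalence
\[
X_{\mb Q} \;\simeq\; \bigotimes_i \Sym_{H\mb Q}(\Sigma^{n_i}H\mb Q)
\]
of ${\cal E}_\infty$-$H\mb Q$-algebras.

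Using the free--forgetful adjunction between ${\cal E}_\infty$-$H\mb Q$-algebras and $H\mb Q$-module spectra, together with the fact that $Y$ is itself an $H\mb Q$-module, the derived mapping space factors as a product:
\[
{\cal E}_\infty(X_{\mb Q},Y) \;\simeq\; \prod_i \Map_{\rm Sp}(\Sigma^{n_i}H\mb Q,\,Y) \;\simeq\; \prod_i \Omega^\infty\Sigma^{-n_i}Y.
\]
Its homotopy groups are read off directly.  On $\pi_0$ we obtain $\prod_i \pi_{n_i}(Y)$, which by the freeness of $A$ on the $x_i$ is in natural bijection with the set of graded ring homomorphisms $\pi_*X\to\pi_*Y$; this proves the first claim.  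For $k\geq 1$ we obtain $\prod_i\pi_{n_i+k}(Y)$; each factor $\pi_{n_i+k}(Y)$ lies in a positive degree of the parity excluded by the hypothesis on $\pi_*Y$, so vanishes.  Hence every path component of ${\cal E}_\infty(X,Y)$ is in fact weakly contractible, which is stronger than the required simple connectivity.

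The main obstacle is the formality statement for $X_{\mb Q}$: once this is in hand the remainder of the argument is a formal manipulation of the $\Sym$--forget adjunction and a reading-off of homotopy groups.  The formality itself is standard, but its use here is essential because without it one could not decompose the mapping space as the product above and obtain such sharp control over its higher homotopy.
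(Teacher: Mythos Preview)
Your argument follows essentially the same route as the paper's: reduce to $X_{\mb Q}$ using rationality of $Y$, identify $X_{\mb Q}$ as a free ${\cal E}_\infty$-$H\mb Q$-algebra on even-degree cells, and use the free--forget adjunction to rewrite the mapping space as a product $\prod_i \Omega^{\infty}\Sigma^{-n_i}Y$. The paper simply asserts the freeness of $X_{\mb Q}$, whereas you supply the formality justification via the cotangent complex; this is a welcome elaboration rather than a different method.

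There is, however, a slip in your final paragraph. You claim that for every $k\geq 1$ the degree $n_i+k$ has the parity excluded by the hypothesis on $\pi_*Y$, and hence that each path component is weakly contractible. But the $n_i$ are even, so for $k$ even the degree $n_i+k$ is even as well, and the hypothesis (which only forces vanishing of $\pi_*Y$ in positive \emph{odd} degrees) says nothing about $\pi_{n_i+k}(Y)$. Your parity argument is valid only for $k$ odd; in particular it gives $\pi_1=0$, which is exactly what the lemma asserts and what the paper proves by looking at $\pi_0$ and $\pi_1$ of the product. The stronger weak-contractibility claim is not supported (and is false in the intended applications, where $Y$ has plenty of even-degree homotopy). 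So your proof of the lemma as stated is correct; just drop the overclaim.
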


\begin{proof}
Since $Y$ is rational, the natural map
\[
{\cal E}_\infty(X \otimes \mb Q, Y) \to {\cal E}_\infty(X,Y)
\]
is a weak equivalence.  As $X \otimes \mb Q$ is equivalent to a free $H\mb
Q$-algebra on some family of cells $x_i\co S^{2n_i} \to X$,
evaluation on the generators gives a weak equivalence, natural in
  $Y$, of the form
\[
{\cal E}_\infty(X \otimes \mb Q, Y) \to \prod \Omega^{\infty+{2n_i}} Y.
\]
The result follows by considering $\pi_0$ and $\pi_1$ of the
right-hand side.
\end{proof}

\begin{thm}
\label{thm:rationalcube}
There exists a strictly commutative diagram in the category of
rational ${\cal E}_\infty$-ring spectra as follows:
\begin{equation}
\label{eq:rationalcube}
\xymatrix{
L_{K(0)}\tmf \ar@{.>}[rrr] \ar@{.>}[d] \ar[ddr]& & &
L_{K(0)}ko \ar@{.>}[d] \ar[ddr] \\
L_{K(0)}\tmf_1(3) \ar@{.>}[rrr] \ar[ddr] & & &
L_{K(0)}ku \ar[ddr] \\
&L_{K(0)} L_{K(1) \vee K(2)} \tmf\ar[rrr] \ar[d] & & &
L_{K(0)} L_{K(1) \vee K(2)} ko \ar[d] \\
&L_{K(0)} L_{K(1)\vee K(2)} \tmf_1(3) \ar[rrr] & & &
L_{K(0)} L_{K(1)\vee K(2)} ku
}
\end{equation}
In this diagram, the bottom square is the rationalization of the
diagram displayed in Proposition \ref{prop:k1-k2-diag} and the
diagonal maps are arithmetic attaching maps.
\end{thm}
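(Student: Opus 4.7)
The plan is to exploit formality: each vertex of the cube has rational homotopy concentrated in even degrees, and the four top vertices $L_{K(0)} Z$ for $Z \in \{\tmf, \tmf_1(3), ko, ku\}$ have polynomial rational homotopy. Combined with Lemma~\ref{lem:rationalmaps}, this provides the rigidity needed to realize the cube strictly on the nose.

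First I would fix strict models: each top vertex as a free rational ${\cal E}_\infty$-algebra $\mb P V_Z$ on a wedge of even-dimensional spheres realizing a polynomial basis of $\pi_* Z \otimes \mb Q$ (namely $\mb Q[c_4,c_6]$, $\mb Q[A,B]$, $\mb Q[\beta]$, $\mb Q[u]$ respectively), and each bottom vertex $L_{K(0)} L_{K(1) \vee K(2)} Z$ by a formal model of its rational homotopy ring. The four diagonal arithmetic attaching maps are realized strictly by choosing, for each generator of $V_Z$, an explicit element of the target representing its image. I would then rigidify the bottom face, which rationalizing Proposition~\ref{prop:k1-k2-diag} supplies only up to homotopy: by Lemma~\ref{lem:rationalmaps} applied to the arrows of the bottom face---whose sources still have free graded-commutative rational homotopy on even generators---the path components of the relevant ${\cal E}_\infty$-mapping spaces are simply connected, so a standard cofibrant-fibrant replacement in the model category of rational ${\cal E}_\infty$-ring spectra rectifies the bottom square to a strict one.

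Finally I would construct the four top arrows by choosing, for each polynomial generator of the source, its image in the target's polynomial ring, in such a way that postcomposition with the target diagonal matches the composite through the source diagonal and the corresponding bottom arrow. The main obstacle is the existence of these compatible lifts: the ring map constrained by the bottom face and the diagonals a priori lands in the target's Laurent or localized extension, and one must check it actually lies in the polynomial subring giving the top arrow. For the cusp maps $c$ and $\tilde c$ this reduces to the classical nonnegativity of constant terms in $q$-expansions of rational modular forms of positive weight, while for the orientation $o$ and the complexification $\iota$ it is immediate from the defining constructions. Once the lifts exist, strict commutativity of each face of the cube---whose source is a free cofibrant ${\cal E}_\infty$-algebra---reduces to a finite check on polynomial generators, which holds by construction, and the resulting strictly commutative cube of formal models gives the claim.
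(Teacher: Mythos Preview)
Your overall strategy—exploit formality of rational ${\cal E}_\infty$-rings with even homotopy, then use Lemma~\ref{lem:rationalmaps} for rigidity—is the same as the paper's, but there is a genuine gap in how you deploy it.

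You assert that the sources in the bottom face ``still have free graded-commutative rational homotopy on even generators'' and invoke Lemma~\ref{lem:rationalmaps} to rigidify that square. This hypothesis fails: for instance $\pi_0\bigl(L_{K(0)} L_{K(1)\vee K(2)} \tmf_1(3)\bigr) \cong \mb Q_2$, and $\mb Q_2$ is not a polynomial $\mb Q$-algebra (it is a field strictly containing $\mb Q$). The same issue arises for all four bottom vertices, whose $\pi_0$ is $\mb Q_2$ rather than $\mb Q$. So Lemma~\ref{lem:rationalmaps} does not apply to rigidify the bottom face in isolation. The paper sidesteps this entirely: it never rectifies the bottom square separately. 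Instead it defines the dotted top arrows directly by explicit formulas (evaluating $c_4$, $c_6$, $A$, $B$ on the relevant Weierstrass equations), checks that every face then commutes on $\pi_*$, observes via Lemma~\ref{lem:rationalmaps} that each face with a \emph{top} vertex as source is therefore homotopy commutative, and finally rigidifies the whole cube at once using the single obstruction in $\pi_1\,{\cal E}_\infty\bigl(L_{K(0)}\tmf,\ L_{K(0)} L_{K(1)\vee K(2)} ku\bigr)$. Only the \emph{initial} vertex $L_{K(0)}\tmf$ need be free over $\mb Q$ for this, and it is.

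Your discussion of the ``main obstacle'' is also off target. The genuine issue in defining the top arrows so that the side faces commute is not about Laurent tails or ``nonnegativity of constant terms in $q$-expansions'': degree considerations already force the image of $c_4 \in \pi_8$ into $\mb Q_2 \cdot \beta^4$. The issue is whether that coefficient lies in $\mb Q$ rather than merely $\mb Q_2$, i.e.\ whether the bottom arrows—built by $K(1)$-local obstruction theory and chromatic gluing—act rationally on $\pi_*$ in nonnegative degrees. The paper resolves this not by an abstract $q$-expansion argument but by the modular interpretation: the bottom maps were constructed to realize evaluation at specific generalized elliptic curves, and the explicit formulas $c_4 \mapsto A^4 - 24AB$, $A \mapsto \beta$, $B \mapsto 0$, $c_4 \mapsto \beta^4$, etc., are visibly defined over $\mb Q$. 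You would need the same input, and your sketch does not supply it.
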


\begin{proof}
To construct diagram~(\ref{eq:rationalcube}) we must first construct
the maps in the top square of the cube as to render the entire diagram
homotopy commutative.
Recall:
\begin{align*}
\pi_*\tmf\otimes\mb Q&\cong \mb Q[c_4,c_6] &\text{where } &|c_i| = 2i\\
\pi_* \tmf_1(3)\otimes \mb Q&\cong \mb Q[A,B] && |A| = 2, |B| = 6
\text{ \cite[proof of Theorem 1.1]{lawsonnaumann}}\\
\pi_*ku\otimes\mb Q&\cong \mb Q[\beta] && |\beta| = 2 \\
\pi_*ko\otimes\mb Q&\cong \mb Q[\beta^2] &&
\end{align*}
In nonnegative degrees, the diagonal maps in
diagram~(\ref{eq:rationalcube}) are given on homotopy groups by
extension of scalars from $\mb Q$ to $\mb Q_2$.

Evaluating the modular forms $c_4$ and $c_6$ on
\[
y^2 + Axy + By = x^3,
\]
the universal elliptic curve with $\Gamma_1(3)$-structure used to
construct $\tmf_1(3)$, we find that
\[
c_4\mapsto A^4-24\, AB\, ,\, c_6\mapsto -A^6+36\, A^3B-216\, B^2.
\]
Similarly, evaluating at the Tate curve $y^2 + \beta xy = x^3$, we
find that
\begin{equation}
  \label{eq:cusp-orientation}
A\mapsto\beta,\,  B\mapsto 0,
\end{equation}
and
\[
c_4\mapsto \beta^4,\,c_6\mapsto -\beta^6.
\]
These formulas make diagram~(\ref{eq:rationalcube})
commutative on homotopy groups.

The homotopy groups of the spectra in the upper square of
diagram~(\ref{eq:rationalcube}) form polynomial algebras, and all
spectra in the diagram have zero homotopy in positive odd degrees.
Lemma~\ref{lem:rationalmaps} thus implies that constructing the maps
in this diagram is equivalent to defining the maps on homotopy
groups, and that the homotopy-commutativity of each square
subdiagram is equivalent to the commutativity of the square on
homotopy groups.  This shows that the cubical diagram commutes in the 
homotopy category.

Finally, the obstruction to lifting a homotopy commutative
cubical diagram to an honestly commutative, homotopy equivalent cubical
diagram lies in $\pi_1 {\cal E}_\infty(L_{K(0)} \tmf, L_{K(0), K(1)
  \vee K(2)} ku)$, which is the zero group (again by Lemma
\ref{lem:rationalmaps}).
\end{proof}

\begin{cor}
There is a commutative square of ${\cal E}_\infty$-ring
spectra as follows:
\begin{equation}\label{eq:K012-diag}
\xymatrix{ L_{K(0)\vee K(1)\vee K(2)}\tmf\ar[rrr]^{c_{K(0)\vee K(1)\vee K(2)}}\ar[d]_{o_{K(0)\vee K(1)\vee K(2)}} & & & L_{K(0)\vee K(1)\vee K(2)} ko\ar[d]^{\iota_{K(0)\vee K(1)\vee K(2)}} \\
L_{K(0)\vee K(1)\vee K(2)} \tmf_1(3) \ar[rrr]^{\tilde{c}_{K(0)\vee K(1)\vee K(2)}} &  & & L_{K(0)\vee K(1)\vee K(2)} ku}
\end{equation}
\end{cor}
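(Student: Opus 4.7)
The plan is to glue the $L_{K(1) \vee K(2)}$-local square from Proposition~\ref{prop:k1-k2-diag} with the $L_{K(0)}$-local square given by the top face of the strictly commutative cube in Theorem~\ref{thm:rationalcube}, using the chromatic fracture square
\[
\xymatrix{ L_{K(0) \vee K(1) \vee K(2)} Y \ar[r] \ar[d] & L_{K(1) \vee K(2)} Y \ar[d] \\ L_{K(0)} Y \ar[r] & L_{K(0)} L_{K(1) \vee K(2)} Y }
\]
which applies to each of the four corners of the target diagram.

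First, I would construct each of the four edges of the desired square by an application of the evident $(K(0), K(1)\vee K(2))$-analogue of Lemma~\ref{lem:chrom-glue-maps}, obtained by applying the derived $\mathcal{E}_\infty$-mapping space functor to the fracture square above. For each edge, the input data is an $L_{K(0)}$-local map (from the top face of the cube), an $L_{K(1) \vee K(2)}$-local map (from Proposition~\ref{prop:k1-k2-diag}), together with a homotopy identifying their common $L_{K(0)} L_{K(1) \vee K(2)}$-localizations. The required compatibility homotopy is exactly the commutativity of the appropriate diagonal side-face of the cube~(\ref{eq:rationalcube}), which identifies the $L_{K(0)} L_{K(1) \vee K(2)}$-image of each $L_{K(0)}$-local edge with the rationalization of the matching $L_{K(1) \vee K(2)}$-local edge from Proposition~\ref{prop:k1-k2-diag}.

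Second, I would verify that the resulting square commutes up to homotopy in $\mathcal{E}_\infty$-ring spectra. Applying the derived mapping space functor $\mathcal{E}_\infty(L_{K(0) \vee K(1) \vee K(2)}\tmf, -)$ to the chromatic fracture square at the corner $L_{K(0) \vee K(1) \vee K(2)} ku$ realizes the target mapping space as a homotopy pullback. Therefore, to show that the two composites in the square are homotopic, it suffices to exhibit homotopies between them at the $L_{K(0)}$-local and $L_{K(1) \vee K(2)}$-local levels, compatible on the $L_{K(0)} L_{K(1)\vee K(2)}$-overlap. The $L_{K(0)}$-local homotopy is trivial because the top face of the cube~(\ref{eq:rationalcube}) is strictly commutative; the $L_{K(1)\vee K(2)}$-local homotopy is supplied by Proposition~\ref{prop:k1-k2-diag}; and the overall commutativity of the cube~(\ref{eq:rationalcube}) ensures that these two homotopies become compatible on the overlap after further localization.

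The principal technical obstacle lies in the higher-coherence bookkeeping needed to assemble the two independent square-level homotopies into a single homotopy at the $L_{K(0)\vee K(1)\vee K(2)}$ level. Concretely, this reduces to $\pi_0$-injectivity of the restriction map from $\pi_0\,\mathcal{E}_\infty(L_{K(0)\vee K(1)\vee K(2)}\tmf, L_{K(0)\vee K(1)\vee K(2)}ku)$ into the pullback arising from the fracture square, which in turn follows from the vanishing of $\pi_1$ of the mapping space into the rational $\mathcal{E}_\infty$-ring spectrum $L_{K(0)}L_{K(1)\vee K(2)}ku$. Since this target is rational with homotopy given by a polynomial algebra on even-degree generators, Lemma~\ref{lem:rationalmaps} applies and delivers the required simple connectivity of path components, so that the glued homotopy exists and the square commutes up to homotopy of $\mathcal{E}_\infty$-ring maps.
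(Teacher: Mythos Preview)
Your argument is correct and rests on the same chromatic-fracture idea as the paper, but the paper organizes the gluing more economically. Rather than constructing the four edges separately and then checking commutativity afterwards, the paper observes that Theorem~\ref{thm:rationalcube} already produced a \emph{strictly} commutative cube whose bottom face is the rationalization of the (rectified) square from Proposition~\ref{prop:k1-k2-diag}; combining that cube with the $K(1)\vee K(2)$-local square yields a commutative square of cospans
\[
L_{K(0)}Y \longrightarrow L_{K(0)}L_{K(1)\vee K(2)}Y \longleftarrow L_{K(1)\vee K(2)}Y,
\]
and a single levelwise homotopy pullback then outputs the desired $L_{K(0)\vee K(1)\vee K(2)}$-local square directly, with its commutativity built in. Your edge-by-edge construction followed by a separate commutativity check via the $\pi_1$-vanishing of Lemma~\ref{lem:rationalmaps} reproduces this, but the coherence bookkeeping you flag as the ``principal technical obstacle'' was in effect already absorbed into the rectification step of Theorem~\ref{thm:rationalcube}. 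One small correction: the target $L_{K(0)}L_{K(1)\vee K(2)}ku$ is not a polynomial algebra but the Laurent-periodic ring $\mb Q_2[\beta^{\pm 1}]$; this does not affect your argument, since Lemma~\ref{lem:rationalmaps} only requires the target to be rational with vanishing odd homotopy, and the polynomial hypothesis applies to the source $L_{K(0)}\tmf$.
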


\begin{proof}
For the $K(0) \vee K(1) \vee K(2)$-local spectra under
consideration, $L_{K(1) \vee K(2)}$ is $p$-adic completion.  We have
canonical arithmetic squares
\[
\xymatrix{ L_{K(0)\vee K(1)\vee K(2)} Y\ar[rr]\ar[d] 
& & L_{K(1)\vee K(2)}Y\ar[d]\\
L_{K(0)}Y\ar[rr]
& & L_{K(0)} L_{K(1)\vee K(2)} Y.}
\]
We can then take levelwise homotopy pullbacks of the maps
\[
L_{K(0)}Y \to L_{K(0)} L_{K(1)\vee K(2)} Y  \leftarrow L_{K(1)\vee K(2)} Y
\]
from the diagonals of diagram~(\ref{eq:rationalcube}) and obtain
the desired commutative square.
\end{proof}

\section{The cohomology computation}\label{sec:cohomology}

The techniques used in this section are very similar to those employed
in \cite{rezk-tmfnotes} to calculate $H^*(\tmf)$.

Let $p$ be a prime, abbreviate $H:=H\mb F_p$ and recall that the
  dual Steenrod algebra $A_* = H_* H$ takes the form
\[
A_* \cong \begin{cases}
P\left(\bxi_1, \bxi_2, \ldots,\right)
&\text{if }p=2,\\
P(\bxi_1, \bxi_2, \ldots )\otimes E(\btau_0, \btau_1\ldots)
&\text{if }p \neq 2.
\end{cases}
\]
Suppose a $p$-local spectrum $X$ is connective and of finite
type, with a map $X \to H$, such that the mod-$p$ homology maps
  isomorphically to the sub-Hopf-algebra of $A_*$ given by
\[
H_* X \cong \begin{cases}
P\left(\bxi_1^2,\ldots,\bxi_{n+1}^2,\bxi_{n+2},\ldots\right)
&\text{if }p=2,\\
P(\bxi_1,\ldots )\otimes E(\overline{\tau}_{n+1},\ldots)
&\text{if }p \neq 2.
\end{cases}
\]
For example, this is true when $X = \BPP{n}$.  In these
circumstances the Adams spectral sequence degenerates, and we find
that
\[
\pi_*X\cong\mb Z_{(p)}[v_1,\ldots,v_n]
\]
where $|v_i|=2(p^i-1)$ \cite[Chapter 4, Section 2, page
111]{ravenel}.  (This is only necessarily an isomorphism as
graded abelian groups unless $X$ is a homotopy commutative and
associative ring spectrum.)
We will establish a converse to this isomorphism under the
assumption that $X$ is a ring spectrum in Theorem
\ref{thm:coh-comp}.
\begin{defn}\label{def:generalbpn}
A $p$-local ring spectrum $R$ is a {\em generalized $\BPP{n}$} if it
admits a complex orientation such that the resulting composite map
\[
\mb Z_{(p)}[v_1,\ldots,v_n]\subseteq \pi_*BP \to \pi_*MU_{(p)} \to\pi_* R
\]
is an isomorphism.
\end{defn}

We remark that as the element $v_i$ is an invariant of the formal
group modulo $(p,v_1,\ldots,v_{i-1})$, the property of a $p$-local,
homotopy commutative, complex orientable ring spectrum being a
generalized $\BPP{n}$ depends only on the ring structure and is
independent of the choice of complex orientation. In particular,
it does not depend on the choice of Hazewinkel, Araki, or arbitrary
other $p$-typical $v_i$-classes.

The following fact served as the basis for the construction of
$\tmf_1(3)$ by chromatic fracture.

\begin{prop}
If $n > 0$, any generalized $\BPP{n}$ is the connective cover of
its $L_n$-localization (or equivalently its $L_{K(0) \vee K(1) \vee
  \cdots \vee K(n)}$-localization).
\end{prop}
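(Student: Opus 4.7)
The two formulations of $L_n$-localization coincide on $p$-local spectra, so I work with $L_n$. The plan has three steps: first, show that $R$ has no chromatic content above level $n$; second, use chromatic fracture to stabilize the chromatic tower; third, invoke chromatic convergence together with connectivity to identify $R$ with the connective cover of $L_nR$.

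\textbf{First,} I would show that $K(m)_*R = 0$ for all $m > n$. The complex orientation provides a ring map $MU_{(p)} \to R$ under which any $p$-typical generator $v_m \in BP_{2(p^m-1)}$ for $m > n$ has image zero in $\pi_*R \cong \mb Z_{(p)}[v_1, \ldots, v_n]$. The element $v_m$ then acts on $\pi_*(R \wedge K(m))$ in two ways, via the complex orientations of the two smash factors: the action coming from $R$ is zero, while the action coming from $K(m)$ is invertible (since $\pi_*K(m) = \mb F_p[v_m^{\pm 1}]$). Both actions coincide, as both factor through $MU_{(p)} \to R \wedge K(m)$, so $\pi_*(R \wedge K(m)) = 0$.

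\textbf{Second,} applying chromatic fracture
\[
\xymatrix{
L_{m+1}R \ar[r] \ar[d] & L_{K(m+1)}R \ar[d] \\
L_mR \ar[r] & L_mL_{K(m+1)}R
}
\]
with $L_{K(m+1)}R \simeq 0$ for each $m \geq n$ forces $L_{m+1}R \simeq L_mR$, so the chromatic tower of $R$ stabilizes at $L_nR$ for all $m \geq n$.

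\textbf{Third,} $R$ inherits an $MU$-module structure from the complex orientation, and since $\pi_*R$ is finitely generated over $\mb Z_{(p)}$ in each degree, chromatic convergence yields $R \simeq \holim_m L_mR \simeq L_nR$ by the previous step. Since $R$ is connective, the connective cover of $L_nR$ is $R$ itself. The principal obstacle is the convergence statement: Hopkins--Ravenel's chromatic convergence is formulated for $p$-local finite spectra, so I would need to invoke its extension to harmonic (or $MU$-module) spectra of the relevant type. An alternative that avoids this issue analyses the fiber $F := \mathrm{fib}(R \to L_nR)$ directly: it satisfies $K(m)_*F = 0$ for every $m \geq 0$ (combining the first step applied to $R$ and to $L_nR$ with the $L_n$-acyclicity of $F$), and one uses its inherited $MU$-module structure to conclude $F \simeq 0$, yielding $R \simeq L_nR$.
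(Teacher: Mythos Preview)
Your step 3 is aimed at proving $R \simeq L_n R$, but this equivalence is false: the proposition asserts only that $R$ is the \emph{connective cover} of $L_n R$, and $L_n R$ genuinely has nontrivial homotopy in negative degrees. Your steps 1 and 2 (once step 1 is patched, see below) correctly show that the chromatic tower of $R$ stabilizes, so $\holim_m L_m R \simeq L_n R$; combined with $R \not\simeq L_n R$, this says precisely that chromatic convergence \emph{fails} for generalized $\BPP{n}$, so no extension of Hopkins--Ravenel will rescue the argument. Your alternative route meets the same fate: the fiber $F$ is dissonant but nonzero---for the standard $\BPP{n}$, which \emph{is} a $BP$-module, one computes $\pi_* F \cong \Sigma^{-n-1}\BPP{n}_*/(p^\infty,\ldots,v_n^\infty) \neq 0$---so the implication ``dissonant $MU$-module implies contractible'' is simply false. (And in any case a complex orientation is only a homotopy ring map; it does not give $R$, or $F$, an $MU$-module structure.) The paper proceeds instead by analysing the fiber directly: since $L_n^f \simeq L_n$ on complex-orientable spectra, the fiber is the finite colocalization $C_n^f R$, which Hovey--Strickland identify as a colimit of function spectra out of generalized Moore spectra; its homotopy is concentrated in degrees at most $-n-1-\sum_{k=0}^n(2p^k-2) < -1$, whence $\pi_{\geq 0} R \cong \pi_{\geq 0} L_n R$.

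There is also a gap in step 1. For $m > n$ the element $v_m$ need not map to zero in $\pi_* R$ (e.g.\ for $n=1$, $p=2$ it could hit a multiple of $v_1^3$), and the two orientations $MU \to R \wedge K(m)$ are different maps, so ``both actions coincide'' needs another justification. The correct argument uses that $v_i$ is an invariant of the formal group modulo $(p,v_1,\ldots,v_{i-1})$: inductively the images of $p, v_1,\ldots,v_n$ in $K(m)_* R$ under the $R$-orientation agree with those under the $K(m)$-orientation and hence vanish; then the image of $v_m$ via $R$, being a positive-degree polynomial in $v_1,\ldots,v_n$, is zero there, while its image via $K(m)$ is a unit.
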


\begin{proof}
Let $L_n^f$ denote the finite localization of Miller
\cite{miller-finitelocalizations}.  The fiber of the map $L_n^f \to
L_n$ is $BP$-acyclic.  As both localizations are smashing, this is
also true for all (homotopy) $BP$-modules, including $p$-local complex
orientable ring spectra.

Therefore, the fiber of the localization map $\BPP{n} \to L_n \BPP{n}$
is equivalent to the finite colocalization $C^f \BPP{n}$.  By
\cite[Proposition~7.10(a)]{hovey-strickland-localisation}, this finite
colocalization is an appropriate homotopy colimit of function spectra
\[
\hocolim F(M(p^{i_0}, v_1^{i_1}, \ldots, v_n^{i_n}), \BPP{n})
\]
out of a tower of generalized Moore spectra.  The homotopy groups of
this function spectrum are
\[
\Sigma^{-(\sum i_k |v_k|)-n-1} \BPP{n}_*/(p^{i_0}, v_1^{i_1}, \ldots, v_n^{i_n}),
\]
and the colimit is ultimately
\[
\Sigma^{-n-1} \BPP{n}_*/(p^\infty, v_1^\infty, \ldots, v_n^\infty)
\]
whose top homotopy group is in degree $-n-1-\Sigma_{k=0}^n (2p^k-2)$.
As $n > 0$, the map $\BPP{n} \to L_n^f \BPP{n}$ is then a model
for the connective cover.
\end{proof}

There does not appear to be an easier method to prove this than direct
calculation.  There are many closely related spectra where the
connective cover is not the correct tool (such as those associated to 
moduli problems where the underlying curve has positive genus).

The authors are not aware of any results establishing uniqueness
of $\BPP{n}$ when $n \geq 2$.  It is not clear when, after
forgetting the ring spectrum structure, two generalized $\BPP{n}$ with
nonisomorphic formal groups might have the same underlying homotopy
type.  It is also not clear, for any particular formal group of the
correct form, how many weak equivalence classes of generalized
$\BPP{n}$ might exist realizing this formal group.  (There exist
results if one assumes additional structure, such as that of an
$MU$-module or $MU$-algebra; see, for example,
\cite{jeanneret-wuthrich-quadratic}.)

The following is a consequence of Definition \ref{def:generalbpn} and
of the existence of the Quillen idempotent splitting $MU_{(p)} \to
BP$.
\begin{lem}
\label{lem:ringmaps}
Suppose $R$ is a generalized $\BPP{n}$.  Then there are maps of
  ring spectra $BP \to R \to H$, with the former map $(2p^{n+1} -
2)$-connected and the latter unique up to homotopy.  If $R$ is an
${\cal A}_\infty$-ring spectrum or an ${\cal E}_\infty$-ring spectrum,
then the map $R \to H$ is a map of ${\cal A}_\infty$-ring spectra or
${\cal E}_\infty$-ring spectra accordingly.
\end{lem}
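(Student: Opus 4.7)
My plan is to produce the ring map $BP \to R$ via the standard $p$-typical refinement of a complex orientation, produce $R \to H$ as a two-step Postnikov/reduction map, and then observe that the same constructions work in the ${\cal A}_\infty$ or ${\cal E}_\infty$ category.

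For the first claim, Definition \ref{def:generalbpn} provides a ring map $MU \to R$ and hence, after $p$-localization, $MU_{(p)} \to R$. Since $\pi_* R$ is $p$-local, I can modify this orientation by a strict isomorphism of formal group laws so that the associated formal group law becomes $p$-typical; the resulting ring map $MU_{(p)} \to R$ then factors through the Quillen idempotent splitting $MU_{(p)} \to BP$, giving a ring map $BP \to R$ whose effect on homotopy is the composite displayed in Definition \ref{def:generalbpn}. Since $\pi_* BP = \mb Z_{(p)}[v_1, v_2, \ldots]$ with $|v_i| = 2p^i - 2$, only the generators $v_1, \ldots, v_n$ contribute to degrees strictly below $2p^{n+1} - 2$, so the map is an isomorphism there by hypothesis. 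In degree $2p^{n+1} - 2$ itself, the target $\pi_{2p^{n+1}-2} R$ is spanned by monomials in $v_1, \ldots, v_n$ already in the image, so the map is surjective in that degree and hence $(2p^{n+1}-2)$-connected.

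For the second claim, $R$ is connective with $\pi_0 R \cong \mb Z_{(p)}$, so the $0$-Postnikov section yields a ring-spectrum map $R \to \tau_{\leq 0} R \simeq \eilm{\mb Z_{(p)}}$; composing with the canonical $\eilm{\mb Z_{(p)}} \to \eilm{\mb F_p} = H$ gives the desired ring map. For uniqueness, any map of spectra $R \to H$ factors uniquely up to homotopy through $\tau_{\leq 0} R$ since $H$ has homotopy concentrated in degree zero, and a ring map $\eilm{\mb Z_{(p)}} \to H$ is determined by its effect on $\pi_0$, which is forced to be the canonical reduction $\mb Z_{(p)} \to \mb F_p$.

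The structured-ring claim then holds because Postnikov truncation of an ${\cal A}_\infty$ (resp.\ ${\cal E}_\infty$) ring spectrum is again of that type with structured truncation map, and the Eilenberg-MacLane functor promotes ordinary commutative rings and ring homomorphisms to ${\cal A}_\infty$/${\cal E}_\infty$ ring spectra and ${\cal A}_\infty$/${\cal E}_\infty$ maps between them. The most delicate point is the first step: producing a genuine ring-spectrum map $BP \to R$ realizing the prescribed behaviour on homotopy. This is routine, but requires some care with the distinction between orientations (viewed up to strict isomorphism of formal group laws) and the underlying maps of ring spectra.
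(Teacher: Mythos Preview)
Your proof is correct and aligns with the paper's approach, which gives no detailed argument at all: the paper simply states that the lemma ``is a consequence of Definition~\ref{def:generalbpn} and of the existence of the Quillen idempotent splitting $MU_{(p)} \to BP$.'' Your argument fleshes out exactly this hint; the only minor stylistic difference is that you $p$-typicalize the orientation and then factor through $BP$, whereas the definition (and the paper's phrasing) suggests simply composing the section $BP \to MU_{(p)}$ with the given orientation $MU_{(p)} \to R$ --- both produce a ring map with the required effect on homotopy.
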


\begin{thm}
\label{thm:coh-comp}
Suppose $R$ is a generalized $\BPP{n}$.  Then the map $R\to H$
induces an isomorphism of $H^* R$ with the left ${\cal A}^*$-module
${\cal A}^*/\!/E(n)$, and of $H_* R$ with the subalgebra
$B_*$ of the dual Steenrod algebra given as follows:
\[
B_* =
\begin{cases}
P(\bxi_1^2, \ldots, \bxi_{n+1}^2, \bxi_{n+2}, \ldots) &\text{if }p=2,\\
P(\bxi_1^2, \ldots) \otimes E(\btau_{n+1}, \ldots) &\text{if }p\neq 2.
\end{cases}
\]
\end{thm}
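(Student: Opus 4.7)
The plan is to establish the cohomology statement $H^*R \cong \mathcal{A}^*/\!/E(n)$; the homology identification with $B_*$ then follows by $\mathbb{F}_p$-linear duality together with an examination of the map $H_*R \to H_*H = A_*$.

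First I would construct a canonical map $\psi \colon \mathcal{A}^*/\!/E(n) \to H^*R$ using Lemma~\ref{lem:ringmaps}, which provides ring maps $BP \to R \to H$ with $BP \to R$ being $(2p^{n+1}-2)$-connected. The induced cohomology map $H^*R \to H^*BP$ is therefore an isomorphism in degrees strictly below $2p^{n+1}-2$. Combining this with the classical computation $H^*BP \cong \mathcal{A}^*/\!/E$, where $E = E(Q_0, Q_1, \ldots)$ and every $Q_i$ kills $1$, together with the inequality $|Q_i| = 2p^i - 1 < 2p^{n+1}-2$ for $0 \le i \le n$, yields $Q_i \cdot 1 = 0$ in $H^*R$ for these $i$. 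The $\mathcal{A}^*$-module map $\mathcal{A}^* \to H^*R$ induced by $R \to H$, sending $a \mapsto a \cdot 1$, therefore factors through the quotient, giving the desired $\psi$.

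Next I would show $\psi$ is an isomorphism by running the Adams spectral sequence for $R$, which converges to $\pi_*R^\wedge_p \cong \mathbb{Z}_p[v_1, \ldots, v_n]$. The change-of-rings formula yields
\[
\Ext_{\mathcal{A}^*}(\mathcal{A}^*/\!/E(n), \mathbb{F}_p) \cong \Ext_{E(n)}(\mathbb{F}_p, \mathbb{F}_p) = \mathbb{F}_p[a_0, \ldots, a_n],
\]
with $a_i$ in bidegree $(1, 2p^i-1)$. The Poincar\'e series of this polynomial ring matches exactly the associated graded of $\mathbb{Z}_p[v_1, \ldots, v_n]$ under the Adams filtration, with $a_i$ detecting $v_i$. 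A Poincar\'e-series comparison between the $E_2$-page of the Adams SS for $R$ and this model then forces $\psi$ to be an isomorphism and the Adams SS to degenerate at $E_2$.

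The main obstacle is controlling $H^*R$ in degrees at or above $2p^{n+1}-1$, where the connectivity of $BP \to R$ no longer provides direct information. One must ensure that $\psi$ is both surjective (no unexpected $\mathcal{A}^*$-module generators of $H^*R$ beyond $1$) and injective (classes such as $Q_{n+1} \cdot 1$ remain nonzero in $H^*R$). The Adams spectral sequence handles both possibilities: an extra generator in $H^*R$ would contribute an $E_2$-class not accounted for in the Poincar\'e series of $\mathbb{Z}_p[v_1, \ldots, v_n]$, while an extra relation would remove an $E_2$-class needed to detect some $v_i$ as a permanent cycle in the expected Adams filtration. Ruling out subtle cancellations between putative extra generators and relations---without the benefit of a strict $BP$-module structure on the generalized $\BPP{n}$---is where the simplification attributed to Baker and Noel presumably enters.
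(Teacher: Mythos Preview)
Your proposal contains a genuine gap, which you yourself identify in the final paragraph: the Poincar\'e-series argument via the Adams spectral sequence does not by itself rule out the possibility that $H^*R$ differs from $\mathcal{A}^*/\!/E(n)$ by a pair of extra generators and relations whose contributions to $\Ext_{\mathcal{A}^*}(H^*R,\mathbb{F}_p)$ cancel. Knowing the abutment $\pi_*R$ and having a map $\psi$ of $\mathcal{A}^*$-modules does not obviously force $\psi$ to be an isomorphism; one would need some additional structural control (e.g.\ that $H^*R$ is cyclic, or bounded-below freeness over $E(n)$) that your outline does not supply. So as written, the proof is incomplete rather than wrong.

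The paper takes an entirely different route that sidesteps this difficulty. Rather than attacking $H^*R$ through the Adams spectral sequence, it computes $H_*R = \pi_*(R \smsh{} H)$ directly by choosing an $\mathcal{A}_\infty$-structure on $BP$ and using the equivalence $R \smsh{} H \simeq (R \smsh{} BP) \smsh{BP} H$. Complex orientability gives $\pi_*(R \smsh{} BP) \cong R_*[t_i]$, and the K\"unneth spectral sequence has $E_2 = \Tor^{BP_*}_{**}(R_*[t_i], \mathbb{F}_p)$, where $BP_*$ acts via the right unit. A Cartan--Eilenberg argument, using that $(p,v_1,\ldots,v_n)$ is a regular sequence on $R_*[t_i]$, computes this Tor explicitly and shows the map $H_*R \to H_*H$ is an inclusion onto a subspace of the right graded dimension. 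One then reads off that $H^*R$ is a cyclic $\mathcal{A}^*$-module on which $Q_0,\ldots,Q_n$ act trivially, forcing $H^*R \cong \mathcal{A}^*/\!/E(n)$ by a dimension count. The point is that this approach produces the injectivity of $H_*R \hookrightarrow A_*$ and the graded dimension of $H_*R$ as outputs of a single Tor computation, rather than trying to deduce them indirectly from $\pi_*R$.
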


\begin{proof}
We recall from \cite[Theorem~3.4]{baker-jeanneret} that the
Brown-Peterson spectrum $BP$ admits an ${\cal A}_\infty$ ring
structure (in fact, it admits many).  In the following we
choose one for definiteness. Smashing the maps from
Lemma~\ref{lem:ringmaps} on the right with $BP$ gives a sequence of
maps
\[
BP \smsh{} BP \to R \smsh{} BP \to H \smsh{} BP.
\]
Complex orientability of $BP$, $R$, and $H$ implies that on
homotopy groups, this becomes the sequence of maps of polynomial
algebras
\[
BP_*[t_i] \to R_*[t_i] \to \mb F_p[t_i],
\]
with the polynomial generators $t_i$ mapped identically.
We then apply the natural equivalence $(- \smsh{} BP) \smsh{BP} H
\simeq (-) \smsh{} H$, together with the K\"unneth spectral sequence
\cite[Theorem IV.4.1]{ekmm}, to obtain a natural map of spectral
sequences:
\begin{equation}
  \label{eq:kunneth-sseqs}
\Tor^{BP_*}_{**}(R_*[t_i], \mb F_p) \to
\Tor^{BP_*}_{**}(\mb F_p[t_i], \mb F_p),
\end{equation}
which strongly converges to the map $R_*H\to H_* H$.
Here the action of the generators $v_i \in BP_*$ is through their
images under the right unit $BP_* \stackrel{\eta_R}{\to} BP_*[t_i]$.
Modulo $(p,v_1,\ldots,v_{k-1})$, the image of $v_k$ under the right
unit is equal to $v_k$.  

Writing $S_* = BP_*/(p,v_1,\ldots,v_n)$, we have an identification
of derived tensor products
\[
(- \otimes^{\mb L}_{BP_*} S_*) \otimes^{\mb L}_{S_*} \mb F_p \cong (-)
\otimes^{\mb L}_{BP_*} \mb F_p.
\]
This shows that the map of equation~(\ref{eq:kunneth-sseqs}) is the
abutment of a map of Cartan-Eilenberg spectral sequences:
\begin{equation}
  \label{eq:cartan-sseqs}
\Tor^{S_*}_{**}(\Tor^{BP_*}_{**}(R_*[t_i], S_*), \mb F_p) \to 
\Tor^{S_*}_{**}(\Tor^{BP_*}_{**}(\mb F_p[t_i], S_*), \mb F_p)
\end{equation}
The elements $(p,v_1,\ldots,v_n)$ form a regular sequence in
$R_*$.  The image of the regular sequence $(p,v_1,\ldots ,v_n)\in
BP_*$ under the map
\[ BP_*\stackrel{\eta_R}{\to} BP_*BP\to R_*BP\simeq R_*[t_i]\]
is therefore a regular sequence, by induction, because
every $v_k$ is invariant modulo $(p,\ldots,v_{k-1})$ and because
  of the assumed properties of the map $BP_*\to R_*$.

This shows that the higher Tor-groups in
\[
\Tor^{BP_*}_{**}(R_*[t_i], S_*)
\]
are zero, with the zero'th term given by the tensor product $R_*[t_i]
\otimes_{BP_*} S_* \cong \mb F_p[t_i]$.

By contrast, the image of $v_k$ in $\mb F_p[t_i]$ under the right unit
is zero for all $k$, and hence the Tor-algebras are exterior
algebras.

Therefore, the map of equation~(\ref{eq:cartan-sseqs}) degenerates to
an edge inclusion:
\[
\mb F_p[t_i] \otimes \Lambda[x_{n+1}, x_{n+2}, \ldots] \to \mb
F_p[t_i] \otimes \Lambda[x_1, \ldots, x_n] \otimes \Lambda[x_{n+1}, x_{n+2},
\ldots]
\]

Here $t_i$ is in total degree $2p^i - 2$ and $x_i$ is in total degree
$2p^i - 1$.  For the right-hand term, the associated graded vector
space already has the same dimension in each total degree as the dual
Steenrod algebra.  Therefore, both the Cartan-Eilenberg and K\"unneth
spectral sequences must degenerate, as the final target is the dual
Steenrod algebra and any non-trivial differentials would result in a
graded vector space with strictly smaller dimension in some degree.

We find that the map $R_* H \to H_* H$ is an inclusion of right
comodules over the dual Steenrod algebra, and the image below degree
$2p^{n+1}-1$ consists only of terms in even degrees.  On cohomology,
this implies that the map ${\cal A}^* \to H^* R$ is a surjection of
left ${\cal A}^*$-modules, and the image of the generator $1 \in {\cal
A}^*$ is acted on trivially by the odd-degree Milnor primitives
$Q^0,\ldots, Q^n$.  The induced map ${\cal A}^*/\!/E(n) \to H^* R$ is
still a surjection and both sides are graded vector spaces of the
same, levelwise finite, dimensions over $\mb F_p$.

This shows that $H^* R$ has the desired form.  The statement for
homology follows by dualizing the cohomology description.
\end{proof}

\section{Proof of Theorem \ref{thm:main}}\label{sec:proof}

We have now assembled all the preliminaries needed to give the proof
of Theorem~\ref{thm:main}, For ease of reference, we recall the
statements we need to prove.

\begin{enumerate}[i)]
\item There is a commutative diagram of connective ${\cal E}_\infty$-ring
    spectra as follows:
\[
\xymatrix{ \tmf_{(2)} \ar[r]^{c}\ar[d]^{o} & ko_{(2)}\ar[d]^{\iota} \\
\tmf_1(3)_{(2)} \ar[r]^{\tilde{c}} & ku_{(2)}}
\]
\item In mod-$2$ cohomology, this induces the
following canonical diagram of modules over the mod $2$ Steenrod
algebra ${\cal A}^*$:
\[ \xymatrix{ {\cal A}^*/\!/{\cal A}(2) & {\cal A}^*/\!/{\cal A}(1)\ar[l]\\
{\cal A}^*/\!/E(2)\ar[u]& {\cal A}^*/\!/E(1).\ar[l]\ar[u]}\]
\item There exists a complex orientation of $\tmf_1(3)_{(2)}$ such that in
homotopy $\tilde{c}$ sends the Hazewinkel generators $v_1$ to $v_1$
and $v_2$ to zero.
\item There is a cofiber sequence of $\tmf_1(3)_{(2)}$-modules
\[
\Sigma^6 \tmf_1(3)_{(2)}\stackrel{\cdot v_2}{\longrightarrow} \tmf_1(3)_{(2)}
\stackrel{\tilde{c}}{\longrightarrow} ku_{(2)}.
\]
\end{enumerate}

\begin{proof}
The existence of the desired commutative diagram of ${\mathcal
E}_\infty$-ring spectra is established by taking connective
covers of diagram~(\ref{eq:K012-diag}).

It is well known that there are isomorphisms of ${\cal A}^*$-modules
$H^*(ko)\cong {\cal A}^*/\!/{\cal A}(1)$ and $H^*(ku)\cong {\cal
  A}^*/\!/E(1)$.  Theorem \ref{thm:reminder}, together with Theorem
\ref{thm:coh-comp}, implies $H_*\tmf_1(3)\cong
P(\bxi_1^2,\bxi_2^2,\bxi_3^2, \bxi_4,\ldots)$ and
$H^*\tmf_1(3)\cong {\cal A}^*/\!/E(2)$.  A well-known result, based on
work on $\tmf$ initiated by Hopkins, Mahowald and Miller, is that
$H^*(\tmf)\cong {\cal A}/\!/{\cal A}(2)$ as a module over the Steenrod
algebra \cite[Theorem 9.2]{hopkins-mahowald}. To the best of the
authors' knowledge, this result still awaits official documentation.
A sketch based on the characterization of \cite[Theorem
14.5]{rezk-tmfnotes} can be found in \cite[Section 21]{rezk-tmfnotes}.

The diagram of ${\cal A}^*$-modules in the statement of the theorem
commutes because all appearing ${\cal A}^*$-modules are cyclic,
generated by $1$.

To address the remaining statements, note that the map
$\pi_*(\tilde{c})\co\pi_*\tmf_1(3)_{(2)} \to\pi_*ku_{(2)}$ is determined by
  its rationalization
\[
\mb Q [A,B] \to \mb Q [\beta],
\]
where we have $A\mapsto\beta$ and $B\mapsto 0$ by construction (see
Theorem~\ref{thm:rationalcube}).  Now recall that there exists an
orientation $BP\to \tmf_1(3)_{(2)}$ which maps $v_1$ to $A$ and $v_2$
to $B$ by \cite[Proposition 8.2]{lawsonnaumann}.

The composite $\tmf_1(3)_{(2)}$-module map $\tilde{c}\circ(\cdot
v_2)\co\Sigma^6 \tmf_1(3)_{(2)} \to ku_{(2)}$ sends the generator to
$0=\pi_*(\tilde{c})(v_2)\in\pi_6(ku_{(2)})$, and hence there is a
factorization in the category of $\tmf_1(3)_{(2)}$-modules
\[
\xymatrix{
\Sigma^6\tmf_1(3)_{(2)} \ar[rr]^{\cdot v_2} \ar[drr]^{0} & & \tmf_1(3)_{(2)}
\ar[rr]\ar[d]^{\tilde c} & & \mathrm{cof}((\cdot v_2))\ar@{.>}[dll]\\
&&
ku_{(2)}
}
\]
Examining homotopy groups, we get an induced equivalence between
$ku_{(2)}$ and the cofiber of $v_2$ as spectra, and hence as
$\tmf_1(3)_{(2)}$-modules.
\end{proof}

\appendix
\section{Appendix: Forms of $K$-theory}\label{app:formsofKU}

In this section we describe how to functorially construct certain forms of
$K$-theory \cite{morava-forms} as ${\cal E}_\infty$-ring spectra, and then
give a discussion of a form of $K$-theory related to $\tmf_1(3)_{(2)}$.
The core content is a restatement of the fact that complex conjugation
acts on $KU$ by ${\cal E}_\infty$-ring maps, that the group $\mb
Z_p^\times$ acts on $\comp{KU}_p$ by ${\cal E}_\infty$-ring maps,
and that the element $-1$ acts compatibly with complex conjugation.
(However, these forms receive less attention than they
might, and the reader who has not read Morava's paper recently
deserves a reminder to do so.)  

Indeed, some of this section could be regarded as consequences of the
Goerss-Hopkins-Miller theorem
\cite{rezk-hopkinsmiller,goerss-hopkins}. After $p$-completion, any
form of $K$-theory that we construct is a Lubin-Tate spectrum for the
formal group law over its residue field, and the spaces of ${\cal
E}_\infty$-maps are homotopically discrete and equivalent to certain
sets of isomorphisms between the associated formal group laws.

\begin{defn}
A {\em form of the multiplicative group scheme} over $X$ is a
group scheme over $X$ which becomes isomorphic to $\mb G_m$ after a
faithfully flat extension.  (This is the same as a one-dimensional
torus in the sense of SGA3.)

If $X$ is a formal scheme over $\Spf(\mb Z_p)$, a {\em form of the
formal multiplicative group} over $X$ is a $1$-dimensional formal group
over $X$ whose reduction to $X/p$ is of height $1$.
\end{defn}

\begin{rmk}
There are numerous examples of forms of $\mb G_m$.  For $b$ and $c$ in
$R$, there is a group scheme structure with unit $0$ on the
complement of the roots of $1 + bx + cx^2$ in $\mb P^1$, given by
\[
F(x,y) = \frac{x+y+bxy}{1 - cxy}.
\]
The isomorphism class of such an object (by an isomorphism fixing the
invariant diffential) is determined by the isomorphism class of $y^2 +
by + c$ under translations $y \mapsto y + r$, and the object is a form
of $\mb G_m$ if and only if the discriminant $b^2 - 4c$ is invertible.
Morava was originally interested in the specific formal group laws of
the form
\[
F(x,y) = \frac{x+y+(1-a)xy}{1 + axy}.
\]
\end{rmk}

\begin{defn}
We denote by $\multmod / \Spec(\mb{Z})$ the stack which is is
the moduli of forms of the multiplicative group scheme $\mb G_m$.
For a fixed prime $p$, we denote by $\fmultmod / \Spf(\mb Z_p)$ the
stack which is the moduli of forms of the formal multiplicative group
$\widehat{\mb G}_m$.
\end{defn}

\begin{prop}
The stack $\multmod$ is equivalent to the stack $BC_2$ classifying
principal $C_2$-torsors, and the stack $\fmultmod$ is equivalent to
the stack $B\mb Z_p^\times = \lim B(\mb Z/p^k)^\times$ classifying
compatible systems of principal $(\mb Z/p^k)^\times$-torsors.
\end{prop}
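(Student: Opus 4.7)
The plan is to apply the general descent principle that if $X$ is a geometric object over a base $S$ and $\underline{\Aut}(X)$ is its automorphism sheaf, then the stack of forms of $X$ (objects fppf-locally isomorphic to $X$) is canonically equivalent to $B\underline{\Aut}(X)$. The equivalence sends a form $X'$ to its isomorphism torsor $\underline{\mathrm{Isom}}(X,X')$ and sends a torsor $P$ back to the twist $P\times^{\underline{\Aut}(X)}X$; effective descent holds because $\mb G_m$ and $\widehat{\mb G}_m$ are affine (resp.\ ind-affine) group schemes of finite presentation.

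Next I would compute the automorphism sheaves. Over any base, a group-scheme endomorphism of $\mb G_m$ is a character $x\mapsto x^n$ for some $n\in\mb Z$, and it is invertible precisely when $n=\pm 1$; thus $\underline{\Aut}(\mb G_m) = \underline{C_2}$ as a constant group scheme over $\Spec(\mb Z)$, which immediately yields $\multmod\simeq BC_2$. For the formal version, over a $p$-complete $\mb Z_p$-algebra the endomorphism ring of $\widehat{\mb G}_m$ is $\mb Z_p$, with $n$ acting as $[n](x) = (1+x)^n - 1$ (the series converges $p$-adically for any $n\in\mb Z_p$); hence $\underline{\Aut}(\widehat{\mb G}_m) = \underline{\mb Z_p^\times}$. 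Since $\mb Z_p^\times = \lim_k (\mb Z/p^k)^\times$, a $\mb Z_p^\times$-torsor is tautologically a compatible system of finite $(\mb Z/p^k)^\times$-torsors, so the descent principle gives $\fmultmod \simeq \lim_k B(\mb Z/p^k)^\times$.

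The main obstacle is reconciling the two definitions of form in the formal case. The text defines a form of $\widehat{\mb G}_m$ over $X/\Spf(\mb Z_p)$ to be any one-dimensional formal group whose mod-$p$ reduction has height $1$, whereas the descent principle above requires it to be fppf-locally isomorphic to $\widehat{\mb G}_m$. The required input is the classical fact (an instance of Lubin-Tate theory at height one, already implicit in the use of the Igusa tower earlier in the paper) that any height-one formal group on a $p$-adic base becomes isomorphic to $\widehat{\mb G}_m$ after pullback along the pro-\'etale Igusa cover of trivializations $\underline{\mathrm{Isom}}(\widehat{\mb G}_m, G)$; this cover is in particular fppf and furnishes the needed local trivialization. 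For $\multmod$ the corresponding identification is already built into the given definition, so no further work is required beyond the automorphism computation.
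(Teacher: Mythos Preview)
Your proposal is correct, and for $\multmod$ it coincides with the paper's argument. For $\fmultmod$ you take a genuinely different but equivalent route. You compute the automorphism sheaf of $\widehat{\mb G}_m$ directly as the profinite constant group $\underline{\mb Z_p^\times}$ and then invoke the general forms--torsors correspondence, using Lubin--Tate/Igusa to verify that every height-one formal group is (pro-)\'etale locally trivial. The paper instead works one torsion level at a time: it identifies $\Aut(\mb G_m[p^k])$ with $(\mb Z/p^k)^\times$, associates to any height-one formal group $\mb G$ the tower of finite \'etale torsors $Y_k = \mathrm{Iso}(\mb G_m[p^k], \mb G[p^k])$, and then argues via Weierstrass preparation (equivalently, via the equivalence between height-one formal groups and $p$-divisible groups of height one and dimension one) that $\mb G$ is recovered from the system $\{\mb G[p^k]\}$. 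The paper's decomposition makes the identification with $\lim_k B(\mb Z/p^k)^\times$ immediate and stays at the level of finite group schemes; your version is conceptually cleaner but hides the same work inside the assertions $\End(\widehat{\mb G}_m)=\mb Z_p$ and pro-\'etale local triviality, both of which are in practice established by exactly this level-by-level analysis.
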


\begin{proof}
The group scheme $\mb G_m$ is defined over $\mb Z$ and its sheaf of
automorphisms is the constant group scheme $C_2$ of order two; as a
result, for any $X$ equipped with $\mb G \to X$ a form of $\mb G_m$,
there is a principal $C_2$-torsor $Y = {\rm Iso}_{X}(\mb G_m, \mb G)$
classifying isomorphisms between $\mb G$ and $\mb G_m$; on $Y$ there
is a chosen isomorphism $\mb G_m \to \mb G$.  Conversely, given such a
$C_2$-torsor $Y \to X$ we can recover $\mb G$ as the quotient $\mb G_m
\times_{C_2} Y \to \Spec(Z) \times_{C_2} Y = X$.

In other language, the moduli stack $\multmod$ is equivalent to the
stack $BC_2$ classifying principal $C_2$-torsors.

Similarly, for a formal group law $\mb G$ let $\mb G[p^k]$ be the
group scheme of $p^k$'th roots of unity.  The sheaf of automorphisms
of $\mb G_m[p^k]$ is the constant group scheme $(\mb Z/p^k)^\times$.
Any formal group $\mb G$ of height $1$ on a formal scheme $X$ over
$\mb Z_p$ carries a sequence of covers $Y_k = {\rm Iso}(\mb G_m[p^k], \mb
G[p^k])$; any height $1$ formal group \'etale-locally has its torsion
isomorphic to that of $\mb G_m$, and so the $Y_k$ form a tower of
principal $(\mb Z/p^k)^\times$-torsors on $X$.  Conversely, the
inductive system of schemes $\mb G[p^k]$ can be recovered as $\mb
G_m[p^k] \times_{(\mb Z/p^k)^\times} Y_k$.

Because $X$ is a formal scheme over $\Spf(\mb Z_p)$, $X$ is a colimit
of schemes where $p$ is nilpotent; we then wish to know that the
formal group $\mb G$ is equivalent data to the directed system $\mb
G[p^k]$.  One can recover this from an equivalence between formal
groups of height $1$ and $p$-divisible groups of height $1$ and
dimension $1$.

At its core, however, this is locally based on the observation that
for a formal group {\em law} of height $1$ on a $p$-adically complete
ring $R$, the Weierstrass preparation theorem implies that the Hopf
algebra $R\pow{x}/[p^k](x)$ representing $\mb G[p^k]$ is free on the
basis $\{1,x,\ldots,x^{p^k-1}\}$.  Both $R\pow{x}$ and its
multiplication are recovered uniquely by the inverse limit of these
finite stages.  The map $\mb G[p] \to \mb G[p^k]$ induces an
isomorphism on cotangent spaces at the identity, and so any coordinate
on $\mb G[p]$ automatically produces coordinates on $\mb G[p^k]$ and
then $\mb G$ itself.
\end{proof}

Both of these moduli stacks naturally carry $1$-dimensional formal
groups, as follows. The stack $\fmultmod$ carries a universal formal
group by definition, and the stack $\multmod$ by taking the completion
of its universal group scheme (which is affine and one-dimensional
since these are flat-local properties). Therefore, it makes
sense to ask if these formal groups can be realized by spectra;
see \cite[Section 4.1]{goerss-bourbaki} for details on such
realization problems.  The formal groups give rise to a
commutative diagram of maps to the moduli stack ${\cal M}_{fg}$ of
formal groups:
\[
\xymatrix{
\comp{(\multmod)}_p \ar[d] \ar[r] &
\fmultmod \ar[d] \\
\multmod \ar[r] &
{\cal M}_{fg}
}
\]
Here $\comp{(\multmod)}_p$ is the base change of $\multmod$ to
$\Spf(\mb Z_p)$.  The realization problem for most of the above
diagram has a solution.
\begin{thm}\label{realizeKU}
There exist lifts ${\cal O}^{top}_{\multmod}$
(resp. ${\cal O}^{top}_{\fmultmod}$) of the structure sheaves of
$\multmod$ (resp. $\fmultmod$) to sheaves of weakly even-periodic
$E_\infty$-ring spectra, along with an isomorphism between the
formal group from the complex-orientable spectrum structure 
and the formal group pulled back from ${\cal M}_{fg}$.  The homotopy
groups of both ${\cal O}^{top}$ and ${\cal O}$ in degree $2k$ are, on
affine charts, the tensor powers $\omega^{\otimes k}$ of the sheaf of
invariant differentials.
\end{thm}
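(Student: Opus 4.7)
The plan is to exploit the identifications $\multmod \cong BC_2$ and $\fmultmod \cong B\mb Z_p^\times$ established in the preceding proposition. Under these equivalences, a sheaf of $E_\infty$-ring spectra on the \'etale site of $\multmod$ (resp.\ $\fmultmod$) amounts to the data of an $E_\infty$-ring spectrum equipped with an action of $C_2$ (resp.\ a continuous action of $\mb Z_p^\times$) by $E_\infty$-ring maps, together with descent data along the atlas $\Spec(\mb Z) \to \multmod$ (resp.\ $\Spf(\mb Z_p) \to \fmultmod$). So the goal reduces to producing two equivariant $E_\infty$-ring spectra whose underlying formal groups match the pullback from ${\cal M}_{fg}$.

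For $\fmultmod$, I would take $A = \comp{KU}_p$. By the Goerss-Hopkins-Miller theorem \cite{goerss-hopkins,rezk-hopkinsmiller}, $A$ is the Lubin-Tate $E_\infty$-ring spectrum associated to $\widehat{\mb G}_m$ over $\mb F_p$, and it is endowed with a canonical continuous action of $\Aut(\widehat{\mb G}_m/\mb F_p) \cong \mb Z_p^\times$ by $E_\infty$-ring maps; this recovers the $p$-adic Adams operations. The formal group arising from the complex-orientable structure is tautologically $\widehat{\mb G}_m$, matching the pullback from ${\cal M}_{fg}$. For $\multmod$ I would analogously use integral $KU$, which carries a classical $E_\infty$-ring structure together with an $E_\infty$-ring involution given by complex conjugation; the latter acts on the multiplicative formal group by the inversion $[-1]$, providing the required $C_2$-action.

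The main obstacle is to promote these group actions from homotopy-coherent data to genuine sheaves of $E_\infty$-ring spectra functorially in arbitrary \'etale maps into the stacks. Via Goerss-Hopkins obstruction theory this reduces to verifying that the moduli space of $E_\infty$-self-maps of $\comp{KU}_p$ (resp.\ of $KU$) is homotopically discrete with $\pi_0$ given by $\Aut(\widehat{\mb G}_m/\mb F_p)$ (resp.\ by $\Aut(\mb G_m) = C_2$). In the $p$-complete case this is part of the Goerss-Hopkins-Miller package applied to a height-one Lubin-Tate spectrum; in the integral case it follows by descent combined with the known structure of the space of $E_\infty$-self-orientations of $KU$.

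Finally, the assertion on homotopy sheaves follows from the classical identification $\pi_{2k}KU \cong \omega_{\mb G_m}^{\otimes k}$, where $\omega_{\mb G_m}$ is the cotangent space at the identity section; the generator in degree $2$ corresponds to the invariant differential $dx/x$ on $\mb G_m$. Since the formation of $\omega$ is compatible with faithfully flat base change, and since the sheaves of spectra constructed above are defined so as to be compatible with pullback along the atlases, this identification descends to the stacks $\multmod$ and $\fmultmod$. Weak even-periodicity is inherited from $KU$ and $\comp{KU}_p$.
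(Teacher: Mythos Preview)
Your overall strategy coincides with the paper's: both use the identifications $\multmod \simeq BC_2$ and $\fmultmod \simeq B\mb Z_p^\times$ and build the sheaves from $KU$ with its complex-conjugation involution and from $\comp{KU}_p$ with its $\mb Z_p^\times$-action by Adams operations. The paper is simply more explicit about what this means chart by chart: for an affine \'etale $\Spec(R) \to \multmod$ with associated $C_2$-Galois cover $\Spec(T) \to \Spec(R)$ it sets ${\cal O}^{pre}(R) = (KU \smsh{} \mb S(T))^{hC_2}$ under the diagonal action $\psi^{-1}\smsh{}\sigma$, and in the profinite case it uses the Behrens--Davis framework of discrete $\mb Z_p^\times$-spectra (via $F_1 = \colim_n (\comp{KU}_p)^{dh(1+p^n\mb Z_p)}$) to make ``continuous fixed points'' precise. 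Your paragraph about rectifying the group action via discreteness of $E_\infty$ self-map spaces is therefore somewhat beside the point: the actions are already available in strict form, and the paper simply uses them.

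Where your argument has a genuine gap is the last step. The claim that ``weak even-periodicity is inherited from $KU$ and $\comp{KU}_p$'' and that the identification $\pi_{2k}\cong\omega^{\otimes k}$ ``descends'' is exactly what requires work. On an affine chart the sections are a homotopy fixed-point spectrum, and one must show that the homotopy fixed-point spectral sequence
\[
H^s(C_2;\, KU_t \otimes T)\ \Longrightarrow\ \pi_{t-s}{\cal O}^{top}(R)
\]
collapses; otherwise there is no reason the sections are even-periodic, and your descent remark only identifies the \emph{sheafified} homotopy groups, not $\pi_*$ of the actual sections on affines, which is what the theorem asserts. The paper carries this out: since $R \to T$ is $C_2$-Galois, both $T$ and its sign-twist $\epsilon\otimes T$ have vanishing higher $C_2$-cohomology, so the spectral sequence degenerates and $\pi_{2k}{\cal O}^{top}(R) = (\epsilon^{\otimes k}\otimes T)^{C_2}$ is an invertible $R$-module. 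In the profinite case the analogous vanishing of $H^{s>0}_c(\mb Z_p^\times; (\comp{KU}_p)_t \,\widehat\otimes\, \comp{T}_p)$ is established by reducing modulo $p^m$, passing to an open subgroup $U=1+p^k\mb Z_p$ on which the Bott twist is trivial, using that $T/p^m$ is ind-Galois over $R/p^m$, and finishing with the Lyndon--Hochschild--Serre spectral sequence and faithfully flat descent for the finite quotient $(\mb Z/p^k)^\times$. This Galois-cohomology computation is the substantive content of the proof and cannot be absorbed into the phrase ``inherited from $KU$''.
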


Given a diagram of solutions to the realization problem on the
above stacks, one would expect to obtain the following upon taking global
sections.
\[
\xymatrix{
\comp{KO}_p & \mb S_{K(1)} \ar[l] \\
KO \ar[u] & \mb S \ar[l] \ar[u]
}
\]
The maps in this diagram are all unit maps for the associated ring
spectra.

\begin{proof}
We first construct pre-sheaves of ${\cal E}_\infty$-ring spectra defined on 
affines.

Given an \'etale map $\Spec(R) \to \multmod$, we form the pullback:
\[
\xymatrix{
\Spec(T) \ar[r] \ar[d] &
\Spec(R) \ar[d] \\
\Spec(\mb Z) \ar[r] &
\multmod
}
\]
(Since the constant group scheme $\mb Z/2\mb Z$ is affine, so is
the canonical map $\Spec(\mb Z)\to \multmod$, showing the above
pull-back is indeed affine.)

Then $\Spec(T) \to \Spec(R)$ is a Galois cover with Galois group
$C_2$. The cohomology groups $H^i(C_2; T)$ are therefore trivial for
$i > 0$, and the fixed subring $H^0(C_2,T)$ is equal to $R$.
Furthermore, if $\epsilon$ denotes the $C_2$-module $\mb Z$ with the
sign action, the cohomology groups $H^i(C_2; \epsilon \otimes
T)$ vanish for $i>0$.  (This will be relevant because
$\pi_2KU\cong\epsilon$ as a $C_2$-module.)

The map $\mb Z \to T$ is \'etale, and so there is a homotopically
unique, and $C_2$-equivariant, realization $\mb S \to \mb S(T)$
such that $\pi_* \mb S(T) = \pi_* \mb S \otimes T$
by the results of \cite[Section 2]{baker-richter-algebraicgalois}
  or \cite[Theorem 7.5.0.6]{higheralgebra}. Write
$\sigma$ for the generator of $C_2$.  The $C_2$-action on $T$ is
compatible with the negation action on the multiplicative group scheme
over $T$.

Since $\pi_*\mb S(T)=\pi_*\mb S\otimes_{\mb Z} T$ is flat over
$\pi_*\mb S$, we have an isomorphism
\[
\pi_*(KU\smsh{}\mb S(T))\cong \pi_*KU\otimes_{\pi_*\mb S}\pi_*\mb
S(T)\cong \pi_*KU\otimes_{\mb Z} T.
\]
The group $C_2$ acts via the diagonal $\psi^{-1} \smsh{} \sigma$  on
$KU\smsh{} \mb S(T)$; this action lifts the $C_2$-action on the
formal group law of $KU \smsh{} \mb S(T)$.  We therefore obtain a
homotopy fixed-point spectrum ${\cal O}^{pre}_{\multmod}(R) := (KU \smsh{}
\mb S(T))^{hC_2}$. The vanishing of the higher group cohomology
implies that the homotopy fixed-point spectral sequence degenerates
into an isomorphism
\[
\pi_*{\cal O}^{pre}_{\multmod}(R)\stackrel{\cong}{\to} H^0(C_2 ;
KU_*\otimes_{\mb Z}T).
\]
We have
\begin{equation}
\pi_2{\cal O}^{pre}_{\multmod}(R)\cong
H^0(C_2,\epsilon\otimes T) \cong T^{\sigma=-1}\subseteq T.
\end{equation}
This is a projective $R$-module of rank $1$ and hence invertible,
and one concludes that this ring of invariants is weakly
even-periodic.  An application of \cite[Lemma 3.8]{lawsonnaumann}
shows the formal group of ${\cal O}^{pre}_{\multmod}(R)$ is the one
classified by the given map $R\to\multmod$.

The construction of $\mb S(T)$ can be made functorial in the
$C_2$-equivariant algebra $T$, and so this gives rise to the desired
presheaf ${\cal O}^{pre}_{\multmod}$ on affine objects over
$\multmod$.

We now discuss a similar setup for $\fmultmod$ and forms of the
multiplicative formal group, based on work of Behrens and Davis.

Following \cite[Section~8]{behrens-davis-fixedpoint}, let $F_1$
be the homotopy colimit of the Devinatz-Hopkins homotopy fixed-point
spectra:
\[
F_1:=\colim_{n} (\comp{KU}_p)^{dh(1 + p^n \mb Z_p)}
\]
This is a discrete ${\cal E}_\infty$ $\mb Z_p^\times$-spectrum
with $K(1)$-localization $\comp{KU}_p$.

Given a $p$-adic $\mb Z_p$-algebra $R$ with an
\'etale map
\[
\Spf(R) \to \fmultmod = \lim [\Spf(\mb Z_p) /\!/ (\mb Z/p^n)^\times]
\]
classifying a formal group $\mb G$ over $R$, we form
the system of pullbacks
\[
\xymatrix{
\Spf(T_n) \ar[rr] \ar[d] &&
\Spf(\mb Z_p) \ar[d] \\
\Spf(R) \ar[r] &
\fmultmod \ar[r] &
[\Spf(\mb Z_p) /\!/ (\mb Z/p^n)^\times]
}
\]
The maps $\Spf(T_n) \to \Spf(R)$ are the Galois covers with Galois
group $(\mb Z/p^n)^\times$ trivializing $\mb G[p^n]$.  We consider the 
$\mb Z_p$-algebra $T = \colim T_n$, which is an
ind-Galois extension of $R$ with Galois group $\mb
Z_p^\times$ and discrete $\mb Z_p^\times$-action.  The maps $\mb Z_p
\to T_n$ are \'etale because the classifying map of $\mb G$ is.

As in the case of $\multmod$ above, we functorially realize this
to obtain a $\mb Z_p^\times$-equivariant directed system $\comp{\mb
  S}_p \to \comp{\mb S}_p(T_n)$ of ${\cal E}_\infty$-ring spectra,
with the action on $\comp{\mb S}_p(T_n)$ factoring through $(\mb
Z/p^n)^\times$, such that $\pi_* \comp{\mb S}_p(T_n) = \pi_* \comp{\mb
  S}_p \otimes_{\mb Z_p} T_n$.  We finally introduce the discrete
${\cal E}_\infty$ $\mb Z_p^\times$-spectrum
\[
\comp{\mb S}_p(T)=\hocolim_n \comp{\mb S}_p(T_n).
\]
The homotopy of $\comp{\mb S}_p(T)$ is isomorphic to $\pi_*
  \comp{\mb S}_p \otimes_{\mb Z_p} T$.
The ${\cal E}_\infty$-ring spectrum $F_1 \smsh{} \comp{\mb
S}_p(T)$ is a discrete $\mb Z_p^\times$-spectrum with the diagonal
action, acting as the Morava stabilizer on $F_1$ and via the Galois
action on $\comp{\mb S}_p(T)$.  As this spectrum is $E(1)$-local,
the $K(1)$-localization of $F_1\smsh{}\comp{\mb S}_p(T)$ is equivalent
to the $p$-completion $\comp{(KU \smsh{} \comp{\mb
  S}_p(T))}_p$, with homotopy groups isomorphic to $\pi_* \comp{KU}_p
\widehat \otimes_{\mb Z_p} \comp{T}_p$.  We define 
  our presheaf to take $R$ to be a continuous homotopy
  fixed-point object:
\[
{\cal O}^{pre}_{\fmultmod}(R) = L_{K(1)} \left((F_1 \smsh{} \comp{\mb
    S}_p(T))^{h\mb Z_p^\times}\right)
\]
The homotopy fixed-point spectrum is $E(1)$-local, and so
  $K(1)$-localization is still simply $p$-adic completion.

We will now show that the resulting spectrum is even-periodic using the
homotopy fixed-point spectral sequence (see \cite[Theorem 5.1]{lawsonnaumann})
\[
H^s_c(\mb Z_p^\times, (\comp{KU}_p)_t\otimes_{\mb Z_p} \comp{T}_p)
\Rightarrow \pi_{t-s}({\cal O}^{pre}_{\fmultmod}(R)).
\]
Fix $n\in\mb Z$.  We will show that the continuous cohomology of $\mb
Z_p^\times$ with coefficients in
$W:=\comp{KU}_{p,2n}\hat{\otimes}_{\mb Z_p} \comp{T}_p$ vanishes for
$s > 0$, and the zero'th cohomology group is free of rank one over
$R$.  Multiplication by the Bott element shows that the $\mb
Z_p^\times$-module $W$ is isomorphic to $\comp{T}_p$ twisted by the
$p$-adic character $\alpha\mapsto \alpha^{n}$.

Since $\Spf(R) = \Spf(T_1) \to \Spf(\mb Z_p)$ is \'etale, $T_1 = R$
must be isomorphic to a finite product of copies of $W(\mb F_q)$ with
$q$ a power of $p$.  Therefore, without loss of generality we may
assume $R = W(\mb F_q)$.

For any $m \ge 1$, there exists a sufficiently large $k$ so that the
subgroup $U := (1 + p^k \mb Z_p)^\times < \mb Z_p^\times$ acts trivially on
$\mb Z/p^m$ by the character $\alpha \mapsto \alpha^n$.  The
continuous cohomology of $U$ with coefficients in $W/p^m$ then
coincides with the continuous cohomology of $U$ with coefficients in
$T/p^m$.  However, as $T/p^m$ is a Galois extension of $T_k/p^m$ with Galois
group $U$, the higher cohomology vanishes and the zero'th cohomology
is isomorphic to $T_k/p^m$.  The Lyndon-Hochschild-Serre spectral sequence
associated to $1 \to U \to \mb Z_p^\times \to (\mb Z/p^k)^\times \to
1$ then degenerates to an isomorphism
\[
H^s_c(\mb Z_p^\times; W/p^m) \cong H^s((\mb Z/p^k)^\times;
\pi_{2n}{KU}/p^m \otimes T_k).
\]
However, the map $R/p^m \to T_k/p^m$ is a Galois extension with Galois
group $(\mb Z/p^k)^\times$; it is in particular faithfully flat, and the
fixed-point functor is part of and equivalence between $R/p^m$-modules and
$T_k/p^m$-modules equipped with a semilinear Galois action.  In
particular, the higher cohomology groups vanish, and by faithfully
flat descent the $R/p^m$-module $(\pi_{2n}{KU}/p^m \otimes T_k)^{(\mb
  Z/p^k)^\times}$ is projective of rank one.  The base ring $R/p^m$ is
isomorphic to $W(\mb F_q)/p^m$, which is local, so the module is
actually free of rank one.  Moreover, the isomorphism $\pi_{2n}
\otimes_R \pi_{2m} \to \pi_{2(n+m)}$ induced by multiplication
descends to an isomorphism on invariants.

Taking limits in $m$, we find that the higher continuous cohomology
groups with coefficients in $W$ vanish, and that the module of
invariants of $W$ is free of rank one.

As a result, we find $\pi_* {\cal O}^{pre}_{\fmultmod}(R)=R[u^{\pm 1}]$
  for some unit $u$ in degree
$2$.  In particular, then, ${\cal O}^{pre}_{\fmultmod}(R)$ is
complex orientable.

The formal group $\mb G$ and the formal group associated with a
complex orientation of ${\cal O}^{pre}_{\fmultmod}(R)$ both arise from
the same descent data on $\comp{T}_p$, and so the associated formal
group is isomorphic to $\mb G$.

We now take these spectra defined on affine \'etale charts and extend
them to sheaves.  Specifically, functoriality in $R$ allows us to
sheafify, associated fibrant objects in the Jardine model structure
are functors ${\cal O}^{top}_{\multmod}$ and ${\cal
  O}^{top}_{\fmultmod}$ of ${\cal E}_\infty$-ring spectra on general
schemes over $\multmod$ and $\fmultmod$ respectively.  On any affine
$R$, the map ${\cal O}^{pre} \to {\cal O}^{top}$ is a weak equivalence
on stalks, and the presheaves of homotopy groups of ${\cal O}^{pre}$
are the quasicoherent sheaves $\omega^t$ of invariant differentials on
any affine $R$.  The sheaves associated to the homotopy groups of
${\cal O}^{top}$ are therefore the same, and the map ${\cal
  O}^{pre}(R) \to {\cal O}^{top}(R)$ is an equivalence for affine
$R$, and for a general $R$ it is recovered as a homotopy limit.
For a more detailed account of this argument in a very similar
situation see \cite[Sections 2.3-2.5]{behrens-build}.
\end{proof}

\begin{rmk}
\begin{enumerate}[i)]
\item  The above construction is motivated by the
  fact that for the $K(1)$-local (${\cal E}_\infty$-ring) spectrum
  $X={\cal O}^{top}_{\fmultmod}(R)$, one has a canonical equivalence
\[
X\stackrel{\simeq}{\to} \left(L_{K(1)}(\comp{KU}_p\wedge X)\right)^{h\mb
  Z_p^\times},
\]
\cite[Theorem 1.3]{davis-torii}, where on the right-hand side the
group $\mb Z_p^\times$ acts through the factor $\comp{KU}_p$
alone. Briefly, $X$ can be recovered from its $K$-theory. Our
construction is such that there is an equivalence of continous $\mb
Z_p^\times$-spectra
\[
L_{K(1)}(\comp{KU}_p\wedge X)\simeq L_{K(1)}(\comp{KU}_p\wedge\comp{\mb
S}_p(T)),
\]
where $\mb S_p(T)$ is built from the principal torsor on $\pi_0 X =
R$ and $\mb Z_p^\times$ on the right-hand side acts diagonally on
both factors.

\item During the above proof we saw that the \'etale site of
  $\fmultmod$ is not very big.  Specifically, all its affine objects
  are disjoint unions of maps from some $\Spf (W(\mb F_q))$ classifying
  a formal group of height one. These formal groups over $W(\mb F_q)$ in
  turn are classified by $p$-adic units $\alpha\in\mb Z_p^\times$. All
  morphisms of the site are generated by change-of-base and by
  automorphisms inducing $p$-adic Adams operations.  Existence and
  uniqueness of these realizations on affine coordinate charts is
  the content of the Goerss-Hopkins-Miller theorem, which 
  lifts the local version of Morava's construction to ${\cal
    E}_\infty$-rings. 

\item Morava considers many more forms of $K$-theory than are given by
  the values of these sheaves, Tate $K$-theory being a prominent
  example \cite[Theorem 2]{morava-forms}.  We take this as an
  opportunity to document the well-known fact that ${\cal
    E}_\infty$-realizations put very restrictive conditions on
  ramification.  The authors learned the following argument from Mike
  Hopkins.  Though these types of arguments have been known (e.g. by
  Ando, Strickland, and others) for several decades, the authors were
  unable to track down a published reference.

  Specifically, assume $p\neq 2$ and consider the extension
  $\comp{KU}_p(-)\otimes_{\mb Z_p} \mb Z_p[\zeta_p]$ which adjoins
  a $p$'th root of unity (for $p=2$ a similar argument considering
  the extension $\mb Z_2\subseteq\mb Z_2[i]$ works). Since the
  extension $\mb Z_p\to\mb Z_p[\zeta_p]$ is flat, this is a
  multiplicative cohomology theory which is a form of $K$-theory in
  Morava's sense. The corresponding homotopy-commutative ring spectrum
  does not admit an ${\cal E}_\infty$-refinement. The
  $K(1)$-local power operations would provide a lift of Frobenius
  $\varphi\co\mb Z_p[\zeta_p]\to\mb Z_p[\zeta_p]$ reducing to the
  $p$'th power map mod $(p)$.  However, the only endomorphisms of this
  ring are automorphisms, and the $p$'th power map is not injective
  mod $p$.  That a fourth root of unity cannot be adjoined to
  the sphere at $p=2$ is shown in
  \cite{schwanzl-waldhausen-vogt-roots}.
\end{enumerate}
\end{rmk}

To conclude this appendix, we explain an application of
Theorem~\ref{realizeKU} relevant to the main concern of the present
paper. Consider the generalized elliptic curve
\[
y^2 + 3xy + y = x^3
\]
over $\mb Z[1/3]$. It lies over the unramified cusp of $\overline{{\cal
  M}}_1(3)$. To identify the resulting formal group we observe that the
curve has a nodal singularity at $(-1,1)$, and that the coordinate $t =
(x+1)/(y-1)$ gives an isomorphism between the smooth locus of this
curve and $\mb P^1 \setminus \{t\ |\ t^2 + 3t + 3 = 0\}$, with
multiplication
\[
G(t, t') = \frac{tt' - 3}{t + t' + 3}
\]
and unit at $t=\infty$.  The coordinate $t^{-1}$ gives an
associated formal group law 
\[
F(x,y) = \frac{x+y+3xy}{1 - 3xy}.
\]
This is not isomorphic to the multiplicative formal group over $\mb
Z[1/3]$, but becomes so after adjoining a third root of unity
$\omega$.  It is classified by an \'etale map $f\co \mb
Z[1/3]\to\multmod$ and we can consider the form of $K$-theory
$KU^\tau$ given by ${\cal O}^{top}_{\multmod}(f)$.  Denoting
$\beta\in\pi_2KU$ the Bott element one can check
\[
KU^\tau_*=\mb Z[1/3][(\sqrt{-3}\beta)^{\pm 1}]\subseteq
KU_*\otimes_{\mb Z}\mb Z[1/3][\omega].
\]
For any $p \neq 3$, the $p$-adic completion of $KU^\tau$ is a
Lubin-Tate spectrum for its formal group law.  In the obstruction
theory of Section~\ref{sec:constructmaps}, one can equally well
substitute the ramified cusp $KU^\tau$ for $KU$ into the construction.
In place of the maps of equation~(\ref{eq:cusp-orientation}), we would
then have
\[
A \mapsto \left(\sqrt{-3} \beta\right),\,B \mapsto -\frac{1}{27}\left(\sqrt{-3} \beta\right)^3.
\]

\bibliography{level3_properties}

\end{document}